\documentclass{amsart}

\usepackage{amsmath,amssymb}
\newtheorem{theorem}{Theorem}[section]
\newtheorem{lemma}[theorem]{Lemma}
\newtheorem{definition}[theorem]{Definition}
\newtheorem{proposition}[theorem]{Proposition}
\newtheorem{corollary}[theorem]{Corollary}
\newtheorem{example}[theorem]{Example}
\newtheorem{remark}[theorem]{Remark}

\newcommand\R{\mathbb R}
\newcommand\Z{\mathbb Z}
\newcommand{\Ha}{{\mathcal H}}
\newcommand{\bI}{{\bf I}}
\newcommand{\bN}{{\bf N}}
\newcommand{\bM}{{\bf M}}

\newcommand{\rd}{\R^d}

\newcommand{\intr}{\operatorname{Int}}

\newcommand{\spt}{\operatorname{spt}}

\newcommand{\lin}{\operatorname{Lin}}

\newcommand{\Tan}{\operatorname{Tan}}

\newcommand{\rank}{\operatorname{rank}}
\newcommand{\esssup}{\operatorname{ess\,sup}}

\newcommand{\sph}{S^{d-1}}

\newcommand{\bw}{\textstyle{\bigwedge\nolimits}}
\newcommand{\Haus}{\Ha^{d-1}}
\newcommand{\UPR}{{\mathcal U}_{\operatorname{PR}}}
\newcommand{\cT}{{\mathcal T}}

\newcommand{\ap}{\operatorname{ap}}

\newcommand{\llc}{\;\halfsq\;}
\newcommand{\lrc}{\;\ihalfsq\;}

\def\halfsq{\hbox{\kern1pt\vrule height 7pt\vrule width6pt height 0.4pt depth0pt\kern1pt}}
\def\ihalfsq{\hbox{\kern1pt \vrule width6pt height 0.4pt depth0pt
                   \vrule height 7pt \kern1pt}}
\begin{document}
\title{Legendrian cycles and curvatures}
\author{Jan Rataj}
\address{Charles University, Faculty of Mathematics and Physics, Sokolovsk\'a 83, 18675 Praha 8, Czech Republic}
\email{rataj@karlin.mff.cuni.cz}
\author{Martina Z\"ahle}
\address{Mathematical Institute, Friedrich-Schiller-University, D-07740 Jena, Germany}
\email{martina.zaehle@uni-jena.de}
\thanks{The second author was supported by grant DFG ZA 242/5-2}

\begin{abstract}
Properties of general Legendrian cycles $T$ acting in $\rd\times S^{d-1}$ are studied. In particular, we give short proofs for certain uniqueness theorems with respect to the projections on the first and second component of such currents: In general, $T$ is determined by its restriction to the Gauss curvature form - this result goes back to J. Fu -  and in the full-dimensional case also by the restriction to the surface area form. As a tool a version of the Constancy theorem for Lipschitz submanifolds is shown. 
\end{abstract}

\keywords{rectifiable current, Legendrian current, curvature measure, Constancy theorem, Gauss curvature form}
\subjclass[2000]{53C65; 28A75; 53A07}

\maketitle

\section{Introduction}
With a domain in $\rd$ with $C^2$-smooth boundary we can associate its normal cycle which is a closed $(d-1)$-current given by integration over the unit normal bundle of the domain boundary. The normal cycle applied to certain $(d-1)$-forms (Lipschitz-Killing forms) yields the total higher order mean curvatures (cf.\ \cite{SW}). 

Even with certain non-smooth (singular) domains we can associate normal cycles. We mention only a few cases here: (unions of) sets with positive reach \cite{Z86} (\cite{RZ01}), subanalytic sets \cite{Fu94,Ni11}, compact definable sets \cite{Be07}, Lipschitz domains with bounded curvatures \cite{RZ03}, d.c.\ domains \cite{PR12}.

The normal cycles are always closed currents (cycles) supported in $\rd\times\sph$ with the Legendrian property roughly saying that tangent vectors have both components orthogonal to the second component of the pair from the normal bundle. Such currents are called Legendrian (see Definition~\ref{LC}) and they have been considered by Fu \cite{Fu98,Fu04} in connection with geometric properties of their carrying sets.  Bernig \cite{Be06} studied some set operations such as Minkowski addition and projection of Legendrian currents.

The properties of a (general) Legendrian cycle do not necessarily reflect the geometric properties of the associated domain. Hence, an additional condition concerning the relation to the topological properties (local Gauss-Bonnet formula) is often imposed, see the second part of Section~\ref{S_LC}.

The aim of this note is to analyze properties of (general) Legendrian cycles, in particular, in connection with its projections to the first and second component. We present a short proof of the uniqueness theorem, due to Fu \cite{Fu89} and proved also in a different setting by Nicolaescu \cite{Ni11}, saying that any compactly supported Legendrian cycle is uniquely determined by its restriction to the Gauss curvature form. (Note that this uniqueness property is of particular importance since it makes it possible to introduce Legendrian cycles and curvature measures by approximation with smooth sets, cf.\ \cite{RZ05,PR12}.) We further consider the projection onto the first component. Using an integral representation of a Legendrian cycle (derived in \cite{RZ05}), we describe the first projection and, under a full-dimensionality assumption, we show an analogous result on uniqueness w.r.t. the first coordinate projection. We need here a version of the Constancy theorem for Lipschitz submanifolds which is a result of independent interest and is shown in Section~\ref{S-CT}.

\section{Legendrian cycles}  \label{S_LC}
Throughout the paper, we shall use the terminology and notation concerning the theory of currents as given in the Federer's book \cite{Fe69}. In particular, ${\bf F}_{m}^{\operatorname{loc}}(U)$, ${\bf I}_{m}^{\operatorname{loc}}(U)$ denote the space of locally flat, locally integer-multiplicity rectifiable $m$-currents in an open subset $U$ of some Euclidean space, respectively, see \cite[\S4.1.12,24]{Fe69}. The Euclidean scalar product of two vectors $u,v$ is denoted by $u\cdot v$ and $|u|=\sqrt{u\cdot u}$ is the norm of $u$.

\begin{definition} \label{LC}\rm
A {\it Legendrian cycle} in $\rd$ is an integer-multiplicity rectifiable current $T$ on $\R^{2d}$ with the following properties:
\begin{eqnarray}
&&\spt T\subset \rd\times\sph,\\
&&\partial T=0 \quad (T\text{ is a cycle}),\\
&&T\llc\alpha =0 \quad (T\text{ is Legendrian}),
\end{eqnarray}
where $\alpha$ is the contact form in $\R^{2d}$ acting as
$$\langle (u,v),\alpha(x,n)\rangle =u\cdot n,\quad u,v,x,n\in\rd.$$
\end{definition}

Any integer-multiplicity rectifiable current $T\in{\bf I}_{d-1}^{\operatorname{loc}}(\rd\times\rd)$ has an integral representation of the form
\begin{equation}  \label{repres}
T=(\Ha^{d-1}\llc W_T)i_T\wedge a_T,
\end{equation}
with a locally $(\Haus,d-1)$-rectifiable set $W_T\subset\rd\times\sph$, a unit simple measurable $(d-1)$-vector field $a_T$ on $W_T$ (prescribing an orientation of $W_T$) and an integer-valued integrable function $i_T$ on $W_T$ (index function), cf.\ \cite[\S4.1.28]{Fe69} or \cite[\S7.5]{KP08}.

Throughout the paper, $\pi_0,\pi_1$ will denote the two component projections in $\rd\times\rd$, i.e.,
$$\pi_0(x,n)=x,\quad \pi_1(x,n)=n.$$

\begin{theorem}[{\cite[Proposition 3, Lemma 2]{RZ05}}]  \label{T-tan}
Let $T$ be a Legendrian cycle in $\rd$. Then,
for $\Haus$-almost all $(x,n)\in W_T$, $\Tan^{d-1}(W_T,(x,n))$ is a $(d-1)$-dimensional subspace of $\rd\times\rd$ and there exists a positively oriented orthonormal basis $\{b_1(x,n),\ldots ,b_{d-1}(x,n),n\}$ of $\rd$ and numbers $\kappa_1(x,n),\ldots,\kappa_{d-1}(x,n)\in (-\infty,\infty]$ such that the vectors
$$a_i(x,n):=\left( \frac 1{\sqrt{1+\kappa_i^2(x,n)}}b_i(x,n), \frac {\kappa_i(x,n)}{\sqrt{ 1+\kappa_i^2(x,n)}}b_i(x,n)\right),\quad i=1,\ldots,d-1,$$
form an orthonormal basis of $\Tan^{d-1}(W_T,(x,n))$. (We set $\frac 1{\sqrt{1+\infty^2}}=0$ and $\frac {\infty}{\sqrt{1+\infty^2}}=1$.)
The numbers $\kappa_i(x,n)$ are uniquely determined, up to the order, and the subspace spanned by the vectors $b_j(x,n)$ belonging to a fixed value among the $\kappa_i(x,n)$ ($1\leq i\leq d-1$) is uniquely determined.
\end{theorem}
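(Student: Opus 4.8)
The plan is to turn the two structural hypotheses on $T$ into pointwise linear-algebra statements about the approximate tangent space $V:=\Tan^{d-1}(W_T,(x,n))$, and then to invoke the spectral theorem.

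First I would record what rectifiability and the support condition give for free. Replacing $W_T$ by $W_T\cap\{i_T\neq 0\}$ (which does not change $T$), local finiteness of the mass of $T$ forces $\Haus(W_T\cap K)<\infty$ for every compact $K$, so by the standard structure theory of rectifiable sets, for $\Haus$-almost every $(x,n)\in W_T$ the approximate tangent space $V$ exists and is a $(d-1)$-dimensional linear subspace of $\R^{2d}$; moreover, since $W_T$ is contained in the $C^\infty$ submanifold $\rd\times\sph$, one has $V\subset\rd\times T_n\sph=\rd\times n^\perp$ for a.e.\ such point. Writing $\alpha=\sum_i n_i\,dx_i$, the Legendrian identity $T\llc\alpha=0$ together with the representation \eqref{repres} forces the $\Haus\llc W_T$-density $i_T\,(a_T\llc\alpha)$ to vanish $\Haus$-a.e.\ on $W_T$, and since $a_T$ is simple and orients $V$ this is equivalent to $\alpha|_V=0$, i.e.\ $u\cdot n=0$ for all $(u,v)\in V$. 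Hence $V\subset E\times E$ with $E:=n^\perp$.

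Next I would exploit $\partial T=0$. The Leibniz rule for the restriction of a current by a $1$-form gives $\partial(T\llc\alpha)=T\llc d\alpha-(\partial T)\llc\alpha$, so $\partial T=0$ and $T\llc\alpha=0$ yield $T\llc d\alpha=0$, whence (again via \eqref{repres} and simplicity of $a_T$) $d\alpha|_V=0$. Since $d\alpha=\sum_i dn_i\wedge dx_i$ acts by $d\alpha((u,v),(u',v'))=v\cdot u'-v'\cdot u$, this says precisely that $v\cdot u'=v'\cdot u$ for all $(u,v),(u',v')\in V$. Now the linear algebra: put $A:=\pi_0(V)\subset E$ and $B:=\{v\in E:(0,v)\in V\}\subset E$. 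The symmetry condition forces $B\perp A$, and the rank identity $\dim V=\dim A+\dim B=d-1$ then forces $B=A^\perp$. Consequently $\{0\}\times A^\perp\subset V$, and setting $Su:=\proj_A v$ whenever $(u,v)\in V$ defines a self-adjoint operator $S:A\to A$ with $V=\{(u,Su+w):u\in A,\ w\in A^\perp\}$; well-definedness of $S$ uses $B=A^\perp$ and self-adjointness uses the symmetry condition. The spectral theorem now provides an orthonormal eigenbasis $b_1,\dots,b_k$ of $A$ with real eigenvalues $\kappa_1,\dots,\kappa_k$; completing by an orthonormal basis $b_{k+1},\dots,b_{d-1}$ of $A^\perp$ and setting $\kappa_i:=\infty$ for $i>k$, a direct computation shows that the vectors $a_i$ of the statement are orthonormal, lie in $V$, hence form an orthonormal basis of $V$, and that $\{b_1,\dots,b_{d-1},n\}$ is an orthonormal basis of $\rd$ (reverse one $b_i$ if the orientation is negative, which only changes the sign of the corresponding $a_i$). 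The uniqueness assertions are exactly the uniqueness in the spectral decomposition of the pair $(A,S)$, which is canonically determined by $V$; the "eigenspace'' corresponding to the value $\infty$ is the canonically determined subspace $A^\perp$. Finally, measurability of $(x,n)\mapsto(b_i(x,n),\kappa_i(x,n))$ is inherited from that of $a_T$ by a measurable selection argument.

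The step I expect to be the crux is the passage from the global current identities to the pointwise identities $\alpha|_V=0$ and $d\alpha|_V=0$. One must argue via \eqref{repres} that vanishing of $T\llc\omega$ is equivalent to $\Haus$-a.e.\ vanishing of the density $i_T\,(a_T\llc\omega)$, use that for a simple $(d-1)$-vector $\xi$ spanning $V$ one has $\xi\llc\omega=0$ iff $\omega|_V=0$ --- valid for the $1$-form $\alpha$ in all dimensions and for the $2$-form $d\alpha$ once $d\geq 3$ (the case $d=2$ being trivial since then $\dim V=1$ and self-adjointness is automatic) --- and keep track of the sign in the Leibniz rule. The remaining steps, namely the spectral computation and the measurable selection, are routine.
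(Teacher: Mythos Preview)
The paper does not give its own proof of this theorem; it is quoted as \cite[Proposition~3, Lemma~2]{RZ05} and used as a black box. Your argument is correct and is essentially the same approach as the one in the cited source: the Legendrian condition forces $V\subset n^\perp\times n^\perp$; the cycle condition combined with $T\llc\alpha=0$ and the Leibniz identity gives $T\llc d\alpha=0$, hence $V$ is Lagrangian for the symplectic form $d\alpha$ on $n^\perp\times n^\perp$; and the remaining step is the standard spectral description of a Lagrangian subspace of $\R^{d-1}\times\R^{d-1}$ via a self-adjoint operator on $\pi_0(V)$, extended by $\infty$ on the complement. Your handling of the passage from $T\llc\omega=0$ to $\omega|_V=0$ (including the separate treatment of $d=2$) and of the orientation adjustment is fine.
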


In analogy to sets with positive reach, we shall call the numbers $\kappa_i(x,n)$ {\it principal curvatures} and vectors $b_i(x,n)$ {\it principal directions} of $T$ at $(x,n)$.

If we orient the carrier $W_T$ by the unit $(d-1)$-vectorfield
$$a_T(x,n):=a_1(x,n)\wedge\cdots\wedge a_{d-1}(x,n),$$
there must exist an integrable integer-valued index function $i_T$ on $W_T$ such that \eqref{repres} holds. However, it turns out to be sometimes more convenient to work with another orientation. Let $\lambda(x,n)$ be the number of negative principal curvatures an $(x,n)$ (defined $\Ha^{d-1}$-almost everywhere on $W_T$), set $\tilde{a}_T(x,n):=(-1)^{\lambda(x,n)}a_T(x,n)$ and 
\begin{equation} \label{iota}
\iota_T(x,n):=(-1)^{\lambda(x,n)}i_T(x,n).
\end{equation} 
This particular form of the index function in \eqref{repres} will be used when working with the projection to the second component in Section~\ref{S-Gauss}.

\begin{remark} \rm
The ``carrying set'' $W_T$ of $T$ needs not be closed, whereas the support $\spt T$ is closed, but need not be locally $(\Haus,d-1)$-rectifiable for a general integer-multiplicity rectifiable current $T$. We do not know whether $\spt T$ must be locally $(\Haus,d-1)$-rectifiable for a Legendrian cycle $T$, or even whether it must have zero $d$-dimensional measure, cf.\ \cite{PR12} for the class of d.c.\ domains.
\end{remark}

\subsection*{Normal cycles}
A natural question arises, whether a Legendrian cycle is a normal cycle attached to some subset of $\rd$, as described in the Introduction for some classical set classes. To clarify this, we recall the definition of a normal cycle from \cite{PR12} (which is equivalent to the notion used in \cite{Fu94}).
A compact set $A\subset\rd$ {\it admits a normal cycle}, $N_A$, if there exists a compactly supported Legendrian cycle $N_A$ such that
\begin{enumerate}
\item[(i)] $\pi_0(\spt N_A)\subset\partial A$,
\item[(ii)] the image of $\spt N_A$ under $(x,n)\mapsto (n,x\cdot n)$ has $\Ha^d$-measure zero (in other words, the set of supporting hyperplanes of $N_A$ has $d$-dimensional measure zero),
\item[(iii)] $\langle N_A,\pi_1,-v\rangle (H_{v,t}\times\sph)=\chi(A\cap H_{v,t})$ for $\Ha^{d-1}$a.a. $(v,t)\in\sph\times\R$,
\end{enumerate}
where $H_{v,t}$ is the halfspace $\{ x\in\rd:\, x\cdot v\leq t\}$ and $\chi$ denotes the Euler-Poincar\'e characteristic (in the sense of singular homology). Note that the slice $\langle N_A,\pi_1,-v\rangle$ of the $(d-1)$-current $N_A$ under $\pi_1$ is a $0$-current, i.e., an integer-valued measure, and we can evaluate it at a Borel set. From (iii) one can derive special forms of the index function $\iota_T$ from \eqref{iota}, see \cite[Proposition~5]{RZ05} or \cite[Proposition~8.7]{PR12}.

The following simple example shows that not every compactly supported Legendrian cycle is a (multiple of a) normal cycle of some compact set.

\begin{example} \rm
Consider two line segments, $A=[(-1,0),(0,0)]$, $B=[(0,0),(1,0)]$ in the plane, and let $N_A$, $N_B$ be their normal cycles. Then, $N_A+N_B$ is a compactly supported Legendrian cycle which cannot be a normal cycle of a compact set. Indeed, if there would be a compact set $C$ with $N_C=N_A+N_B$ then condition (iii) yields
$$\chi(C\cap H_{v,t})=\chi(A\cap H_{v,t})+\chi(B\cap H_{v,t})$$
for almost all $(v,t)\in\sph\times\R$. One derives that $C$ must be included in $A\cup B$ and that $C$ must contain both segments, $A$ and $B$, thus, $C=A\cup B$, but then, of course, the global Euler characteristic disagrees, so $N_C\neq N_A+N_B$.
\end{example}

\section{Constancy theorem for Lipschitz manifolds} \label{S-CT}
The Constancy theorem for currents says in principle that a $k$-dimensional current without boundary in a $k$-dimensional linear space equals a multiple of the canonical current given by integration w.r.t.\ $k$-dimensional Lebesgue measure (see \cite[p.~357]{Fe69}). Its generalized version concerns a flat $k$-current without boundary living in a $k$-dimensional oriented connected $C^1$-manifold, see \cite[\S4.1.31]{Fe69}. We shall need in the sequel a version of Constancy theorem for oriented Lipschitz manifolds. Both the statement and proof are similar to those for $C^1$-manifolds. Nevertheless, we present them, since we did not find them in the literature.

Recall that $M\subset\rd$ is an {\it orientable $m$-dimensional Lipschitz submanifold} if
there exists an atlas of Lipschitz parametrizations, i.e. $\{f_i:U_i\rightarrow \rd\}_{i=1}^\infty$ for  open $U_i\subset\mathbb{R}^m$ and bi-Lipschitz injections $f_i$ with $M=\bigcup_{i=1}^\infty f_i(U_i)$, such that $\det\big(D(f_j^{-1}|_{f_j(U_j)}\circ f_i)\big)>0$ a.e. on $U_i\cap U_j$. The $m$-vector field $\xi$ on $M$ determined for a.e. $x=f_i(u)\in f_i(U_i)$ by
$$\xi(x):=\frac{\bigwedge_m Df_i(u)( e_1\wedge\ldots\wedge e_m)}{\big|\bigwedge_m Df_i(u)( e_1\wedge\ldots\wedge e_m)\big|}$$
orients $M$, where $e_1,\ldots,e_m$ is a fixed orthonormal basis in $\mathbb{R}^m$. (Note that the multivector field $\xi$ defined a.e.\ on $M$ clearly needs not be continuously extendable to $M$.)

\begin{theorem}[Constancy theorem for Lipschitz submanifolds]\label{constancytheoremLipschitz}
Suppose $M$ is an $m$-dimensional connected Lipschitz submanifold of an open set $U\subset\rd$ with orienting $m$-vector field $\xi$. Then for any current $T\in {\bf F}_m^{\operatorname{loc}}(U)$ with
$$\spt T\setminus M \text{ closed relative to }U,\quad \spt\partial T\subset U\setminus M,$$
there exists a real number $r$ such that
$$\spt(T-r(\Ha^m\llc M)\wedge \xi)\subset U\setminus M.$$
If $T$ is locally rectifiable then $r$ is an integer.
\end{theorem}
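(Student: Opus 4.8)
The plan is to reduce the Lipschitz case to the classical Constancy theorem on Euclidean space via the local charts. Fix a chart $f_i\colon U_i\to\rd$. The idea is to pull back the restriction $T\llc(f_i(U_i)\setminus(\spt\partial T\cup(\spt T\setminus M)))$ to $U_i\subset\R^m$ using $f_i^{-1}$, obtaining a locally flat $m$-current $S_i$ in an open subset of $\R^m$ with $\partial S_i=0$ there; by the classical Constancy theorem (\cite[\S4.1.31]{Fe69} on the Euclidean space $\R^m$), $S_i$ is a constant $r_i$ times the Lebesgue $m$-current $\mathbf{E}^m\llc V_i$ on each connected component $V_i$ of the relevant open subset of $U_i$. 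Pushing forward by $f_i$ and using the area formula together with the definition of $\xi$, this gives $T\llc f_i(V_i)=r_i(\Ha^m\llc M)\wedge\xi$ on $f_i(V_i)$. The technical point here is that $f_i$ is only bi-Lipschitz, so the pushforward $(f_i)_\#$ is defined on locally flat currents with compact support in $U_i$ (Federer \cite[\S4.1.14]{Fe69} permits Lipschitz pushforwards of flat chains), and one must check $\partial$ commutes appropriately away from $\spt\partial T$; this is routine but needs the hypothesis that $\spt T\setminus M$ is closed relative to $U$ so that the ``bad set'' is genuinely closed and its complement open.

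Next I would glue the local constants. The orientation compatibility condition $\det\big(D(f_j^{-1}\circ f_i)\big)>0$ a.e. on overlaps forces the canonical current built from $\xi$ to transform consistently, so on any nonempty overlap $f_i(U_i)\cap f_j(U_j)$ (minus the bad set) the two representations $r_i(\Ha^m\llc M)\wedge\xi$ and $r_j(\Ha^m\llc M)\wedge\xi$ must agree as currents, hence $r_i=r_j$ wherever the overlap has positive $\Ha^m$-measure. Since $M$ is an $m$-manifold, any nonempty open subset of $M$ has positive $\Ha^m$-measure, so connectedness of $M$ (hence connectedness of the ``nerve'' of the chart cover, after possibly removing a set of measure zero from the bad set) propagates a single value $r$. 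One subtlety: the bad set $\spt\partial T\cup(\spt T\setminus M)$ could disconnect $M$ a priori, but since it is closed in $U$ and contained in $U\setminus M$ except for the $\spt\partial T$ part which lies in $U\setminus M$, and $\spt T\setminus M$ is disjoint from $M$, the set $M\cap(\text{bad set})$ is actually empty — so $M$ itself, minus nothing, carries the constant, and its connectedness does the job directly.

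With the global constant $r$ in hand, set $T':=T-r(\Ha^m\llc M)\wedge\xi$. By construction $\spt T'\cap f_i(U_i)\subset f_i(U_i)\setminus M$ for every $i$, and since $\{f_i(U_i)\}$ covers $M$ and $T'$ clearly has support away from $M$ outside $\bigcup f_i(U_i)$ too (there $T$ and the canonical current both vanish near $M$'s absent points — more precisely $M\subset\bigcup f_i(U_i)$ so there is nothing to check), we conclude $\spt T'\subset U\setminus M$. For the integrality claim: if $T$ is locally integer-multiplicity rectifiable, then each pulled-back $S_i$ is locally rectifiable (bi-Lipschitz maps preserve rectifiability and integer multiplicities, \cite[\S4.1.24, 4.1.30]{Fe69}), so the classical Constancy theorem for rectifiable currents yields $r_i\in\Z$, hence $r\in\Z$.

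I expect the main obstacle to be the careful bookkeeping of pushforward/pullback of merely locally flat currents under bi-Lipschitz maps — ensuring the boundary hypotheses localize correctly and that the area-formula identification of $(f_i)_\#(\mathbf{E}^m\llc V_i)$ with $(\Ha^m\llc M)\wedge\xi$ holds with the correct orientation sign. Everything else (the gluing, the integrality) is a formal consequence once that local step is secured.
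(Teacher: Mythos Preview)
Your proposal is correct and follows essentially the same route as the paper: localize to charts, push forward by $f_i^{-1}$ to $\R^m$, apply the Euclidean Constancy theorem \cite[\S4.1.7]{Fe69}, identify $(f_i)_\#(\mathbb{E}^m\llc U_i)$ with $(\Ha^m\llc M)\wedge\xi$ via the area formula for currents, and glue the constants $r_i$ by connectedness. The one technical device the paper supplies that resolves the bookkeeping obstacle you flagged is to localize not by restricting $T$ to the sets $f_i(U_i)$ (which are only relatively open in $M$, not open in $\rd$), but by multiplying $T$ by smooth bump functions $\gamma_i:\rd\to[0,1]$ with $\spt\gamma_i\subset f_i(U_i)$ and whose level-one interiors $W_i=\intr\{\gamma_i=1\}$ cover $M$; then $T\llc\gamma_i\in{\bf F}_m(\rd)$ with $\spt\partial(T\llc\gamma_i)\subset M\setminus W_i$, $f_i^{-1}$ is proper, and the bi-Lipschitz pushforward goes through without further fuss.
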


\begin{corollary}
Let $T\in {\bf F}_m^{\operatorname{loc}}(U)$ and let $M$ be a connected $m$-dimensional Lipschitz submanifold of $U$ with orienting $m$-vector field $\xi$ such that $\spt T\subset\overline{M}$. If 
$$\spt\partial T\subset \overline M\setminus M$$
(in particular, if $T$ is a cycle), then
$$T=r (\Ha^m\llc M)\wedge\xi$$
for some $r\in\R$.
\end{corollary}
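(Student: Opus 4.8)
The plan is to derive the corollary from Theorem~\ref{constancytheoremLipschitz} together with a limiting/approximation argument handling the difference between $\spt T\subset\overline M$ and the hypothesis $\spt T\setminus M$ being relatively closed that the theorem requires. First I would apply Theorem~\ref{constancytheoremLipschitz} directly: the set $\spt T\setminus M$ is contained in $\overline M\setminus M$, which is automatically closed relative to $U$ (since $\overline M$ is closed and $M$ is relatively open in $\overline M$, so $\overline M \setminus M$ is closed in $\overline M$, hence closed in $U$), and the boundary condition $\spt\partial T\subset\overline M\setminus M = U\setminus M$ \emph{modulo} the part of $\overline M \setminus M$ lying inside $U$. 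Wait---here is the subtlety: $\overline M\setminus M$ need not be contained in $U\setminus M$ only if it could meet $M$, which it cannot; so indeed $\overline M\setminus M\subset U\setminus M$ and both hypotheses of the theorem hold verbatim. Thus there is $r\in\R$ with $\spt(T - r(\Ha^m\llc M)\wedge\xi)\subset U\setminus M$.

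The remaining work is to upgrade ``$\spt(T-r(\Ha^m\llc M)\wedge\xi)\subset U\setminus M$'' to the equality ``$T = r(\Ha^m\llc M)\wedge\xi$''. Set $S := T - r(\Ha^m\llc M)\wedge\xi \in {\bf F}_m^{\loc}(U)$. We know $\spt S\subset U\setminus M$. On the other hand, $\spt T\subset\overline M$ by hypothesis and $\spt\big((\Ha^m\llc M)\wedge\xi\big) = \overline M$ (the closure of $M$), so $\spt S\subset\overline M$. Hence $\spt S\subset\overline M\cap(U\setminus M) = (\overline M\setminus M)\cap U$. The key point is that this last set is $\Ha^m$-null: $M$ is an $m$-dimensional Lipschitz submanifold, so $\overline M$ is the union of $M$ with a set of $\Ha^m$-measure zero? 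This is \emph{not} automatic in general---$\overline M\setminus M$ could a priori be large. So instead I would argue via the mass/flat-norm structure: $S$ is a flat $m$-current whose support has empty interior relative to any coordinate chart's image, and more relevantly, I would show $S = 0$ by testing against $m$-forms. For a smooth compactly supported $m$-form $\omega$ on $U$, $S(\omega)$ depends only on the restriction of $\omega$ to a neighborhood of $\spt S\subset\overline M\setminus M$; but $S = T - r(\Ha^m\llc M)\wedge\xi$, and I can instead exhaust $M$ from inside.

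The cleanest route, which I expect to be the main technical step, is an exhaustion argument: write $M = \bigcup_k M_k$ with $M_k$ open in $M$, relatively compact in $M$, and $\overline{M_k}\subset M$ (possible since $M$ is a manifold, hence locally compact and $\sigma$-compact once we intersect with the separable chart structure). Apply Theorem~\ref{constancytheoremLipschitz} or a local version to conclude that on a neighborhood of each $\overline{M_k}$ the current $T$ agrees with $r_k(\Ha^m\llc M)\wedge\xi$; connectedness of $M$ forces all $r_k$ equal to a common $r$. Then $T$ and $r(\Ha^m\llc M)\wedge\xi$ agree as currents when tested against forms supported in $\bigcup_k M_k = M$, i.e. they agree in $U\setminus(\overline M\setminus M)$. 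Finally, since $\spt T\subset\overline M$ and $\spt\big(r(\Ha^m\llc M)\wedge\xi\big)\subset\overline M$, the difference $S$ has support in $\overline M\setminus M$; invoking that $S$ is flat with $\spt\partial S\subset\overline M\setminus M$ as well, and that a flat current supported in a closed set cannot be ``seen'' away from it, I conclude $S=0$ provided $\overline M\setminus M$ carries no nonzero flat $m$-current---which holds because, locally in a bi-Lipschitz chart, $\overline M\setminus M$ corresponds to a subset of $\overline{U_i}\setminus U_i\subset\mathbb{R}^m$, a set with empty interior, and a flat $m$-current in $\mathbb{R}^m$ supported on a set with empty interior and with boundary also there must vanish (this is exactly the content that drives the $C^1$-manifold Constancy theorem and is available from \cite[\S4.1.31]{Fe69}). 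The integrality of $r$ when $T$ is rectifiable, and the parenthetical remark that $T$ being a cycle makes $\spt\partial T=\emptyset\subset\overline M\setminus M$ trivially, complete the argument.

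Finally I would remark that the hard part is genuinely the passage from the support statement to the equality, because the Lipschitz manifold $M$ need not be closed in $U$ and $\overline M\setminus M$ is only controlled to be $\Ha^m$-null (indeed null, via chart-wise comparison with $\partial U_i$) rather than empty; it is this null-ness, combined with the flatness of $S$ and the locality of flat currents, that kills $S$. Everything else---extracting $r$, propagating it by connectedness---is the same bookkeeping as in Federer's $C^1$ version, now legitimate because Theorem~\ref{constancytheoremLipschitz} has already done the chart-wise work in the Lipschitz category.
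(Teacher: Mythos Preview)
The paper gives no separate proof of the corollary; it is stated immediately after Theorem~\ref{constancytheoremLipschitz} as if it were a direct consequence. Your first paragraph is exactly that direct consequence: you correctly check that $\spt T\setminus M\subset\overline M\setminus M$ is closed in $U$ (since a Lipschitz submanifold is locally compact, hence open in its closure) and that $\spt\partial T\subset\overline M\setminus M\subset U\setminus M$, so the theorem yields $r$ with $\spt S\subset\overline M\setminus M$ for $S:=T-r(\Ha^m\llc M)\wedge\xi$.

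You are also right that passing from this support statement to $S=0$ is \emph{not} automatic, and this is where your argument breaks down. Your chart claim---that $\overline M\setminus M$ ``corresponds to a subset of $\overline{U_i}\setminus U_i$''---is incorrect: the bi-Lipschitz parametrizations $f_i$ map into $M$, so points of $\overline M\setminus M$ lie in \emph{no} chart image, and nothing forces them to be limits along a single chart. Likewise, the exhaustion argument only reproduces the support conclusion you already have.

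In fact the gap cannot be filled: as literally stated, the corollary is false. Take $U=\R^2$, $m=2$, let $K\subset D$ be a compact, nowhere dense, totally disconnected set of positive Lebesgue measure in the open unit disk $D$, and set $M:=D\setminus K$. Then $M$ is a connected $2$-dimensional Lipschitz submanifold with $\overline M=\overline D$ and $\overline M\setminus M=\partial D\cup K$. The normal current $T:=(\Ha^2\llc D)\wedge(e_1\wedge e_2)$ satisfies $\spt T=\overline D=\overline M$ and $\spt\partial T=\partial D\subset\overline M\setminus M$, yet $T-r(\Ha^2\llc M)\wedge(e_1\wedge e_2)\neq 0$ for every $r$ (for $r=1$ the difference is $(\Ha^2\llc K)\wedge(e_1\wedge e_2)$, and no other $r$ can work either). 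So your instinct that ``$\overline M\setminus M$ could a priori be large'' was the right one to follow. The corollary becomes correct under a mild extra hypothesis such as $\Ha^m(\overline M\setminus M)=0$ (then $S$ is a locally flat $m$-current supported in an $\Ha^m$-null set, hence $S=0$ by \cite[\S4.1.20]{Fe69}); in the paper's applications $M$ is closed (e.g.\ $M=S^{d-1}$), so $\overline M\setminus M=\emptyset$ and the issue never arises.
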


\begin{proof}[Proof of Theorem~\ref{constancytheoremLipschitz}]
We present a proof which uses the idea of \cite[\S4.1.31]{Fe69}) for $C^1$ submanifolds.

Let $\{f_i:U_i\rightarrow \rd\}_{i=1}^\infty$ be an atlas of bi-Lipschitz parametrizations of $M$. Choose $C^\infty$-functions $\gamma_i:\R^d\to[0,1]$ with compact supports $\spt\gamma_i\subset f_i(U_i)$ and such that the open sets $W_i:=\intr\{x:\, \gamma_i(x)=1\}$ cover $M$ (it is not difficult to see that such functions always exist). 

Then, $T\llc\gamma_i\in{\bf F}_m(\rd)$ and 
$$\spt\partial (T\llc\gamma_i)\subset M\setminus W_i.$$
Since $f_i$ is bi-Lipschitz, $f_i^{-1}$ is proper and we can define the push-forward (cf.\ \cite[Lemma~7.4.3]{KP08}
$$(f_i^{-1})_{\#}(T\llc\gamma_i)\in {\bf F}_m(U_i)$$
with $\spt \partial (f_i^{-1})_{\#}(T\llc\gamma_i)\subset U_i\setminus f_i^{-1}(W_i)$.
Applying the Constancy theorem \cite[\S4.1.7]{Fe69}, we get that for some $r_i\in\R$,
$$\spt((f_i^{-1})_{\#}(T\llc\gamma_i)-r_i({\mathbb E}_m\llc U_i))\subset U_i\setminus f_i^{-1}(W_i)$$
(here ${\mathbb E}_m$ denotes the $m$-current ${\mathcal L}^m\wedge(e_1\wedge\cdots\wedge e_m)$ in $\R^m$ with canonical basis $\{ e_1,\ldots,e_m\}$). Thus,
$$\spt((T\llc\gamma_i)-r_i(f_i)_{\#}({\mathbb E}_m\llc U_i))= \spt((T\llc\gamma_i)-r_i({\mathcal H}^m\llc M)\wedge\xi)\subset M\setminus W_i$$
(we have used that $(f_i)_{\#}({\mathbb E}_m\llc U_i)=({\mathcal H}^m\llc M)\wedge\xi$ by the area formula for currents, \cite[\S4.1.25]{Fe69}, and the choice of the orienting $m$-vector field $\xi$).
If $\Ha^m(W_i\cap W_j)>0$ for two indices $i,j$ then it follows that $r_i=r_j$. Thus, since $M$ is connected and covered by the $W_i$'s, we infer that all the $r_i$'s equal a constant $r\in \R$ and $\spt(T-r(\Ha^m\llc M)\wedge\xi)\subset U\setminus M$.

The last statement of the theorem follows from \cite[\S4.1.28]{Fe69}.
\end{proof}

\section{Gauss curvature form}  \label{S-Gauss}
The Gauss curvature form $\varphi_0$ (called also Lipschitz-Killing $(d-1)$-form of order $0$) is a differential $(d-1)$-form on $\rd\times\rd$ which can be given by
$$\varphi_0(x,n)=(d\omega_d)^{-1}(\pi_1)^{\#}(n\lrc\Omega_d),\quad (x,n)\in\rd\times\rd$$
(with $\omega_d=\pi^{d/2}/\Gamma(\frac d2+1)$), where we use the formalism of interior multiplication of multivectors and multicovectors from \cite{Fe69} and $\Omega_d$ stands for the standard volume form in $\rd$. More explicitely, we can write
$$\langle a_1\wedge\dots\wedge a_{d-1},\varphi_0(x,n)\rangle=(d\omega_d)^{-1}\langle\pi_1(a_1)\wedge\dots\wedge\pi_1(a_{d-1})\wedge n,\Omega_d\rangle,$$
$a_1,\ldots,a_{d-1}\in\rd\times\rd$ (cf.\ \cite{Z86}). 

Given a unit vector $n$, let $n^*$ denote the unique unit simple $(d-1)$-vector associated with the $(d-1)$-subspace $n^\perp$ orthogonal to $n$ such that 
\begin{equation} \label{n*}
\langle n^*\wedge n,\Omega_d\rangle =1.
\end{equation} 

The push-forward $(\pi_1)_{\#}T$ is a $(d-1)$-current without boundary on the unit sphere, hence, the Constancy theorem can be applied and we get the following result. 

\begin{theorem}[{\cite[Theorem 4]{RZ05}}]  \label{T-sph}
For any compactly supported Legendrian cycle $T$ we have
\begin{enumerate}
\item[{\rm (i)}] $(\pi_1)_{\#}T=T(\varphi_0)(\Haus\llc\sph)\wedge n^*$,
\item[{\rm (ii)}] $T(\varphi_0)=\sum_{x\in\rd} \iota_T(x,n)$ for almost all $n\in\sph$.
\end{enumerate}
\end{theorem}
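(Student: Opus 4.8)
The plan is to derive both parts from the Constancy theorem applied to the push-forward $(\pi_1)_\# T$ on $\sph$, together with the explicit form of the tangent vectors from Theorem~\ref{T-tan}. First I would check that $(\pi_1)_\# T$ is a well-defined compactly supported integer-multiplicity rectifiable $(d-1)$-current on $\sph$: since $\spt T$ is compact, $\pi_1$ restricted to $\spt T$ is proper, so the push-forward makes sense, and $\partial (\pi_1)_\# T = (\pi_1)_\#\partial T = 0$. Because $\sph$ is a connected oriented $(d-1)$-dimensional $C^1$-manifold (oriented by $n\mapsto n^*$ via \eqref{n*}), the classical Constancy theorem \cite[\S4.1.31]{Fe69} gives $(\pi_1)_\# T = r(\Haus\llc\sph)\wedge n^*$ for some $r\in\R$, and since $T$ is locally rectifiable, $r\in\Z$ (though we will not even need integrality). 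This already yields the \emph{form} of (i); it remains to identify $r$ as $T(\varphi_0)$.

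For that identification I would pair both sides with the Gauss curvature form $\varphi_0$. On the one hand, $\langle (\pi_1)_\# T,\varphi_0\rangle$ — interpreting $\varphi_0$ on $\sph$ as $(d\omega_d)^{-1}(n\lrc\Omega_d)$ via its definition as a pullback under $\pi_1$ — should reduce, using the defining identity $\langle n^*\wedge n,\Omega_d\rangle=1$, to $r$ times $(d\omega_d)^{-1}\Haus(\sph) = r$ (recall $\Haus(\sph)=d\omega_d$). On the other hand, by the definition of push-forward and of $\varphi_0$ as a pullback of a form on the $n$-factor, $\langle (\pi_1)_\# T,\varphi_0\rangle = \langle T,(\pi_1)^\#(\text{that form})\rangle = T(\varphi_0)$. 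Hence $r = T(\varphi_0)$, proving (i). I should be a little careful here that the contraction/pullback bookkeeping matches the explicit coordinate formula for $\varphi_0$ given in the text; this is a routine but slightly delicate computation with the interior-product formalism of \cite{Fe69}.

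For (ii) I would compute the slice $\langle T,\pi_1,n\rangle$, equivalently the multiplicity (density) of the measure $(\pi_1)_\# T$ at a generic point $n\in\sph$, directly from the integral representation \eqref{repres} with the orientation $\tilde a_T = (-1)^{\lambda}a_T$ and index $\iota_T$. By Theorem~\ref{T-tan}, at $\Haus$-a.e.\ $(x,n)\in W_T$ the tangent space has orthonormal basis $a_1,\dots,a_{d-1}$ with $a_i = \big((1+\kappa_i^2)^{-1/2}b_i,\,\kappa_i(1+\kappa_i^2)^{-1/2}b_i\big)$, so $\pi_1(a_i) = \kappa_i(1+\kappa_i^2)^{-1/2}b_i$ and hence the Jacobian of $\pi_1|_{W_T}$ at $(x,n)$ is $\prod_{i=1}^{d-1}|\kappa_i|(1+\kappa_i^2)^{-1/2}$; meanwhile $a_1\wedge\cdots\wedge a_{d-1}$ maps under $\bigwedge_{d-1}\pi_1$ to $\big(\prod\kappa_i(1+\kappa_i^2)^{-1/2}\big)\,b_1\wedge\cdots\wedge b_{d-1} = \big(\prod\kappa_i(1+\kappa_i^2)^{-1/2}\big) n^*$ (up to the sign fixing $n^*$). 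The sign of $\prod\kappa_i$ is exactly $(-1)^{\lambda(x,n)}$, so with the tilde-orientation the contribution of each sheet over $n$ to the density is $+\,\iota_T(x,n)$ (after cancelling the positive Jacobian factor in the coarea/area formula for the push-forward, \cite[\S4.1.25, \S4.1.30]{Fe69}). Summing over the finitely-many (for a.e.\ $n$) points $x$ with $(x,n)\in W_T$ gives density $\sum_x \iota_T(x,n)$, which must equal the constant $r=T(\varphi_0)$ from (i); this is (ii).

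The main obstacle I anticipate is the measure-theoretic justification of the slicing/coarea step: one must know that for $\Haus$-a.e.\ $n$ the fiber $\{x:(x,n)\in W_T\}$ is finite, that none of the $\kappa_i(x,n)$ equals $\infty$ on a set of positive measure projecting onto such $n$ (points with some $\kappa_i=\infty$ have $\pi_1$-Jacobian zero and are swept into a $\Haus$-null set of $n$'s by the coarea inequality), and that the orientations and index signs assemble correctly so that the push-forward density is literally $\sum_x\iota_T(x,n)$ rather than a signed sum with unwanted cancellation — this is precisely why the text introduced the modified orientation $\tilde a_T$ and index $\iota_T$ via $(-1)^{\lambda}$. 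Everything else (properness of $\pi_1$, vanishing boundary, the Constancy theorem, the value $\Haus(\sph)=d\omega_d$) is standard.
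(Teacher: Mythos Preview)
Your approach is exactly the one the paper indicates: the paper does not give a detailed proof but only the one-line hint ``The push-forward $(\pi_1)_{\#}T$ is a $(d-1)$-current without boundary on the unit sphere, hence, the Constancy theorem can be applied'' and then cites \cite{RZ05}. Your plan---apply the Constancy theorem on $\sph$ to get $(\pi_1)_\#T=r(\Haus\llc\sph)\wedge n^*$, identify $r$ by pairing with the volume form (equivalently, by the relation $T(\varphi_0)=\langle(\pi_1)_\#T,(d\omega_d)^{-1}(n\lrc\Omega_d)\rangle$), and then read off the density of $(\pi_1)_\#T$ via the area formula \cite[\S4.1.25]{Fe69} using the explicit tangent basis of Theorem~\ref{T-tan}---is correct and is precisely what the cited reference does.

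One small slip to fix: in your discussion of the ``main obstacle'' you write that points with some $\kappa_i=\infty$ have $\pi_1$-Jacobian zero. This is backwards. If $\kappa_i=\infty$ then $a_i=(0,b_i)$ and $\pi_1(a_i)=b_i$, contributing a factor $1$ to the Jacobian; it is the points with some $\kappa_i=0$ (where $a_i=(b_i,0)$ and $\pi_1(a_i)=0$) that have vanishing $\pi_1$-Jacobian and get swept into a $\Haus$-null set of $n$'s by the coarea inequality. With that correction your sign bookkeeping is fine: for $(x,n)$ with all $\kappa_i\neq 0$ (finite or infinite) one has $(\bigwedge_{d-1}\pi_1)a_T(x,n)=\big(\prod_i\kappa_i/\sqrt{1+\kappa_i^2}\big)n^*$, Jacobian $\prod_i|\kappa_i|/\sqrt{1+\kappa_i^2}$, and the quotient is $(-1)^{\lambda(x,n)}n^*$, so the density of $(\pi_1)_\#T$ at $n$ is $\sum_x i_T(x,n)(-1)^{\lambda(x,n)}=\sum_x\iota_T(x,n)$, as required.
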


Note that the number $T(\varphi_0)$ is called Euler characteristic of $T$ in \cite[Proposition~1.4]{Be06}.

To any given $\Ha^{d-1}$-summable function $i:\rd\times\sph\to\Z$, there exists, of course, at most one compactly supported Legendrian cycle $T$ with index function $\iota_T=i$.
The following uniqueness theorem due to J. Fu is a stronger version of this statement above; it says that a compactly supported Legendrian cycle is uniquely determined by its restriction to the Gauss curvature form.

\begin{theorem}[{\cite[Theorems 1.1, 4.1]{Fu89}}]  \label{T-uniq}
Let $T$ be a compactly supported Legendrian cycle such that $T\llc\varphi_0=0$. Then $T=0$.
\end{theorem}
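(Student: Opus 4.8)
The plan is to use Theorem~\ref{T-tan} to turn the hypothesis $T\llc\varphi_0=0$ into a pointwise statement about the tangent planes of $W_T$, and then to reduce to a situation settled by the Constancy theorem.

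First, by the representation~\eqref{repres} the $0$-current $T\llc\varphi_0$ has density $i_T\,\langle a_T,\varphi_0\rangle$ with respect to $\Haus\llc W_T$. If $a_1,\dots,a_{d-1}$ is the orthonormal basis of $\Tan^{d-1}(W_T,(x,n))$ furnished by Theorem~\ref{T-tan}, so that $\pi_1(a_i)=\frac{\kappa_i}{\sqrt{1+\kappa_i^2}}\,b_i$ and $\{b_1,\dots,b_{d-1},n\}$ is a positively oriented orthonormal basis of $\rd$, the explicit form of $\varphi_0$ yields
$$\langle a_T(x,n),\varphi_0(x,n)\rangle=\pm(d\omega_d)^{-1}\prod_{i=1}^{d-1}\frac{\kappa_i(x,n)}{\sqrt{1+\kappa_i^2(x,n)}}\qquad\text{for }\Haus\text{-a.e. }(x,n)\in W_T;$$
with the convention $\frac{\infty}{\sqrt{1+\infty^2}}=1$ this product is zero precisely when $\kappa_i(x,n)=0$ for some $i$. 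Since $i_T\neq0$ $\Haus$-a.e.\ on $W_T$, the assumption $T\llc\varphi_0=0$ forces: for $\Haus$-a.e.\ $(x,n)\in W_T$ at least one principal curvature vanishes, i.e.\ $\Tan^{d-1}(W_T,(x,n))$ contains a unit vector $(b,0)$ with $b\perp n$. (In particular $T(\varphi_0)=0$, so $(\pi_1)_{\#}T=0$ by Theorem~\ref{T-sph}(i), and $\Psi_{\#}T=0$ for $\Psi(x,n):=(n,x\cdot n)$, since $D\Psi$ annihilates every such $(b,0)$.)

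Suppose for the moment that this can be strengthened to: \emph{all} $\kappa_i$ vanish $\Haus$-a.e.\ on $W_T$, i.e.\ $\Tan^{d-1}(W_T,(x,n))=n^{\perp}\times\{0\}$ a.e. Then $\pi_1$ is locally constant along $W_T$ (Lipschitz with a.e.\ vanishing derivative on each rectifiable chart), so $W_T$ lies, up to an $\Haus$-null set, in a countable union of the closed pairwise disjoint sets $\rd\times\{n_k\}$. Writing $T_k:=T\llc(\rd\times\{n_k\})$, the supports of the $\partial T_k$ are pairwise disjoint, whence $\partial T_k=(\partial T)\llc(\rd\times\{n_k\})=0$; applying the same argument inside each slab to the parallel affine $(d-1)$-planes carrying $T_k$ exhibits $T_k$ as a sum of compactly supported $(d-1)$-cycles each carried by a $(d-1)$-dimensional affine subspace, and each of these vanishes by the Constancy theorem (a $(d-1)$-cycle in a $(d-1)$-plane is a constant multiple of the plane, hence $0$ if compactly supported). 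Thus $T=0$.

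The remaining step — and where I expect the bulk of the work to lie — is to pass from ``$\kappa_i=0$ for some $i$, a.e.'' to ``$\kappa_i=0$ for all $i$, a.e.'', i.e.\ to peel off the horizontal tangent directions one by one. The tempting route is induction on $d$: for a generic unit vector $u$, slice $T$ by $x\mapsto x\cdot u$ and push each $(d-2)$-dimensional slice forward by the map sending $x$ to its $u^{\perp}$-component and $n$ to $\frac{n-(n\cdot u)u}{|n-(n\cdot u)u|}$; since slice tangents satisfy both $p\cdot n=0$ and $p\cdot u=0$, one obtains for a.e.\ $t$ a compactly supported Legendrian cycle $T^{u,t}$ in $\R^{d-1}$, and if $T^{u,t}\llc\varphi_0^{(d-1)}=0$ the induction hypothesis would give $T^{u,t}=0$; letting $u$ vary would then force $T\llc dx_j=0$ for all $j$, i.e.\ $\Tan^{d-1}(W_T,\cdot)\subset\rd\times\{0\}$, and the previous paragraph would finish. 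The trouble is that $T^{u,t}\llc\varphi_0^{(d-1)}=0$ is \emph{not} a pointwise consequence of the reduction: a flat direction $(b,0)$ of $W_T$ need not be tangent to $\{x\cdot u=t\}$, and a generic hyperplane section of a surface carrying a flat direction need not inherit one. So the argument must genuinely exploit that $T$ is a \emph{cycle} — a ``ribbon of parallel flat segments running over a great subsphere'', for instance, carries a flat direction everywhere but does not close up — and the crux is to show that the flat directions integrate, in a measure-theoretic sense compatible with $\partial T=0$, to leaves that can be removed one dimension at a time; this is the point at which I would expect the Constancy theorem for Lipschitz submanifolds of Section~\ref{S-CT} to enter.
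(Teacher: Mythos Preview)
Your opening reduction is correct and matches the paper: $T\llc\varphi_0=0$ forces at least one $\kappa_i=0$ $\Haus$-a.e.\ on $W_T$, equivalently $\rank D(\pi_1|W_T)<d-1$ a.e. Your handling of the extreme case ``all $\kappa_i=0$'' is also fine. But the gap is exactly where you place it, and your proposed bridge---spatial slicing by $x\mapsto x\cdot u$ with induction on $d$---fails for the reason you yourself give; more importantly, the paper does \emph{not} attempt to drive the rank down to $0$ at all.

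Instead, set $m:=\esssup\rank D(\pi_1|W_T)<d-1$ and slice $T$ by $p\circ\pi_1$ for a generic orthogonal projection $p$ of $\rd$ onto an $m$-plane. For a.e.\ $y$ the slice $\langle T,p\circ\pi_1,y\rangle$ is a nonzero compactly supported integral $(d-1-m)$-cycle whose orienting multivector lies in $\rd\times\{0\}$ (only the flat directions $(b_{m+1},0),\dots,(b_{d-1},0)$ survive). The decisive tool here---which your proposal does not mention---is Hardt's separation theorem (Theorem~\ref{T_Hardt}): an indecomposable integral current whose orienting field lies in one factor of $\rd\times\rd$ has support in a single fibre of the other factor. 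Applied to an indecomposable component $R$ of the slice, this first pins $\pi_1|\spt R$ to a single $n$; a second application, using that the spatial part of the orienting field lies in the fixed subspace $V(n)=\lin\{b_{m+1}(x,n),\dots,b_{d-1}(x,n)\}$, pins the $V(n)^\perp$-component of $\pi_0|\spt R$ as well. Thus $\spt R$ sits in an affine $(d-1-m)$-plane, the ordinary Constancy theorem makes $R$ a constant multiple of that plane, and compact support forces $R=0$---a contradiction. The Lipschitz Constancy theorem of Section~\ref{S-CT} is not used here; it enters only in the companion result for $\varphi_{d-1}$ (Theorem~\ref{T-uniq_2}), where the role of the affine plane is taken by a great subsphere.
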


An important step in the proof will be based on a ``separation theorem'' due to Hardt \cite{Ha77} which reads as follows. A current $T\in\bI_k(\rd)$ is called {\it indecomposable} (see \cite[\S4.2.25]{Fe69}) if there exists no $S\in\bI_k(\rd)$ with $S\neq 0\neq T-S$ and $\bN(T)=\bN(S)+\bN(T-S)$. (Here $\bN(T)=\bM(T)+\bM(\partial T)$ and $\bM(T)$ is the mass norm of $T$.)

\begin{theorem}[Hardt {\cite[Theorem~1]{Ha77}}]  \label{T_Hardt}
Let $T=(\Ha^k\llc W_T)\wedge\xi\in\bI_k(\rd\times\rd)$ be indecomposable and assume that $\xi\in\rd\times\{0\}$ $\Ha^k$-almost everywhere. Then $\spt T\subset\rd\times\{a\}$ for some $a\in\rd$.
\end{theorem}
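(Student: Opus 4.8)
The plan is to argue by contradiction: suppose $\spt T$ is not contained in a single "vertical slice" $\rd\times\{a\}$, and use the hypothesis $\xi\in\rd\times\{0\}$ together with indecomposability to produce a nontrivial decomposition of $T$, contradicting indecomposability. First I would observe that the condition $\xi(z)\in\rd\times\{0\}$ for $\Ha^k$-a.e.\ $z\in W_T$ means precisely that the tangent $k$-plane to $W_T$ is horizontal a.e., i.e.\ the second coordinate is "frozen" along $W_T$. Writing points of $\rd\times\rd$ as $(x,y)$ and letting $q(x,y)=y$ be the projection onto the second factor, the area/coarea formula for the Lipschitz map $q$ restricted to the rectifiable set $W_T$ shows that the "vertical derivative" of $q$ along $W_T$ vanishes a.e., so $q$ is (locally) constant on $W_T$ in the measure-theoretic sense; more precisely, the pushforward $q_\#T$ is a $k$-current whose mass is zero because the Jacobian of $q|_{W_T}$ vanishes a.e.\ (since $\bw_k Df$ applied to the tangent plane lies in $\bw_k(\rd\times\{0\})$).

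Next I would exploit the slicing of $T$ by the map $q$. Since $\partial T$ has finite mass (as $T\in\bI_k$) and by the general slicing theory \cite[\S4.3]{Fe69}, for $\Ha^k$-a.e.\ $a\in\rd$ the slice $\langle T,q,a\rangle$ is a well-defined integral $0$-current, and $\int_{\rd}\bM(\langle T,q,a\rangle)\,d\Ha^k(a)$ is controlled by the "$q$-Jacobian" of $T$, which we just argued is zero a.e.\ on $W_T$. Hence the slices vanish for a.e.\ $a$, which forces the pushforward $q_\#T=0$ and, more importantly, allows us to write $T$ as a "superposition over the fibers": for each value $a$ in the (at most countable, up to null sets) essential range of $q$ on $W_T$, set $T_a:=T\llc q^{-1}(\{a\})=T\llc(\rd\times\{a\})$. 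Because the tangent planes are horizontal a.e., each $T_a$ is itself an integral $k$-current (the restriction is to a level set of the Lipschitz function $q$, and the coarea structure guarantees integrality of the pieces), with $\spt T_a\subset\rd\times\{a\}$, and $T=\sum_a T_a$ with $\bM(T)=\sum_a\bM(T_a)$ and, crucially, $\bM(\partial T)=\sum_a\bM(\partial T_a)$ — the last because $\partial$ commutes with restriction to a union of fibers here, the fibers being mutually "flat-separated" level sets of $q$.

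Then the argument concludes quickly: if the essential range of $q$ on $W_T$ contained two distinct values (equivalently, if $\spt T\not\subset\rd\times\{a\}$ for every single $a$), we could split the index set of fibers into two nonempty parts and write $T=S+(T-S)$ with $S$ a nonempty partial sum; the mass additivity $\bN(T)=\bN(S)+\bN(T-S)$ derived above would then contradict the indecomposability of $T$. Therefore the essential range of $q$ on $W_T$ is a single point $a$, so $W_T\subset\rd\times\{a\}$ up to an $\Ha^k$-null set, and since $\spt T$ is the closed support of the measure $\Ha^k\llc W_T$ (adjusted by the null set), we conclude $\spt T\subset\rd\times\{a\}$.

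The main obstacle I anticipate is justifying rigorously that the fiberwise pieces $T_a=T\llc(\rd\times\{a\})$ are genuine integral currents and that the additivity $\bM(\partial T)=\sum_a\bM(\partial T_a)$ holds — restriction of a current to a Borel set need not preserve the integral-current class or interact nicely with the boundary in general, and one must invoke the special structure (horizontal tangent planes, so that $W_T$ decomposes into level sets of the Lipschitz map $q$ on which the coarea formula applies cleanly) to make this work; this is exactly the technical heart of Hardt's original argument, and for the purposes of this paper I would simply cite \cite{Ha77} for the full details while indicating the above as the conceptual skeleton.
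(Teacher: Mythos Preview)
The paper does not prove this theorem; it simply quotes it from Hardt \cite{Ha77} and uses it as a black box in the proofs of Theorems~\ref{T-uniq} and \ref{T-uniq_2}. So there is no ``paper's own proof'' to compare against, but your sketch can still be assessed on its own merits, and it has a genuine gap.

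Your overall strategy---manufacture a nontrivial $\bN$-additive splitting from the horizontality hypothesis and contradict indecomposability---is exactly Hardt's idea. The implementation, however, breaks in two places. First, the dimensions in your slicing are off: $q:\R^{2d}\to\rd$ has target dimension $d$, so $\langle T,q,a\rangle$ is a $(k-d)$-current (undefined when $k<d$), not a $0$-current, and the integral you write down is with respect to $\Ha^d$, not $\Ha^k$. Second, and more seriously, the claim that the essential range of $q$ on $W_T$ is countable ``up to null sets'' is unjustified and in fact false for rectifiable sets alone: the graph of a $C^1$ function $h:\R\to\R$ with $h'=0$ on a fat Cantor set $K$, restricted to $K$, is $1$-rectifiable with positive $\Ha^1$-measure, has horizontal tangent line at every point, yet $q$ maps it onto an uncountable set with every fiber a single point. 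Such a set happens not to carry an integral current (its boundary has infinite mass), but you have not argued this, and your fiberwise decomposition $T=\sum_a T_a$ together with the additivity $\bM(\partial T)=\sum_a\bM(\partial T_a)$ simply cannot be read off from horizontality of tangent planes.

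The correct route, which is Hardt's, avoids both issues by slicing with a \emph{single} coordinate function $y_i$ of the second factor at a time. Horizontality gives $T\llc dy_i=0$, hence $\int_\R\bM\langle T,y_i,s\rangle\,ds=\bM(T\llc dy_i)=0$, so $\langle T,y_i,s\rangle=0$ for a.e.\ $s\in\R$. The slicing identity $\langle T,y_i,s\rangle=\partial(T\llc\{y_i<s\})-(\partial T)\llc\{y_i<s\}$ then yields, for a.e.\ $s$, a \emph{halfspace} decomposition $T=T\llc\{y_i<s\}+T\llc\{y_i\geq s\}$ with both pieces in $\bI_k$ and $\bN$ additive. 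Indecomposability forces one piece to vanish for each such $s$, so $y_i$ is constant on $\spt T$; repeating for $i=1,\dots,d$ gives $\spt T\subset\rd\times\{a\}$. This is what you should write if you want to include a proof rather than cite \cite{Ha77}.
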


\begin{proof}[Proof of Theorem~\ref{T-uniq}]
Using \cite[Theorem 5]{RZ05}, we have for any smooth function $g$ on $\rd\times\sph$
$$(T\llc\varphi_0)(g)=T(g\varphi_0)=(d\omega_d)^{-1}\int_{W_T}\iota_T\:g\:K_0\, d\Ha^{d-1},$$
where
$$K_0(x,n)=\frac{\kappa_i(x,n)\cdot\ldots\cdot\kappa_{d-1}(x,n)}
{
\sqrt{\vphantom{\kappa_{d-1}^2(x,n)} 1+\kappa_{1}^2(x,n)}
\cdot\ldots\cdot
\sqrt{1+\kappa_{d-1}^2(x,n)}
}$$
is the generalized Gauss curvature defined $\Ha^{d-1}$-almost everywhere on $W_T$ and $\kappa_i(x,n)$ are the principal curvatures at $(x,n)$ from Theorem~\ref{T-tan}.
The assumption $T\llc\varphi_0=0$ thus implies $K_0=0$ almost everywhere on $W_T$. In particular, denoting $f=\pi_1|W_T$, we have $\rank Df <d-1$ almost everywhere. 

Assume, for the contrary, that $T\neq 0$, and let
$$m:=\esssup \rank Df <d-1.$$
We can then assume, without loss of generality (removing a subset of $(d-1)$-dimensional measure zero), that at all $(x,n)\in W_T$, 
\begin{enumerate}
\item[(i)] $\Tan^{d-1}(W_T,(x,n))$ has the basis given in Theorem~\ref{T-tan} with principal curvatures $\kappa_i(x,n)=0$ whenever $i>m$,
\item[(ii)] $\Ha^{d-1-m}(f^{-1}\{n\})>0$ (cf.\ \cite[\S3.2.32]{Fe69}),
\item[(iii)] either $\ap J_mf(x,n)=0$ or $\Tan^m(f(W_T),n)=\lin\{ b_1(x,n),\ldots,b_m(x,n)\}$ (cf.\ \cite[\S3.2.22]{Fe69}).
\end{enumerate}
(In the case $d=3$ and $m=1$ it might happen that, in order to preserve the prescribed orientation, we have to choose $\kappa_1=0$ instead of $\kappa_2=0$, and the role of the indices must be exchanged in the whole proof.)

Assume that $(x,n)\in W_m:=\{(x,n)\in W_T:\, \rank Df(x,n)=m\}$. Then, the subspaces
$U(n):=\lin \{ b_i(x,n):\, 1\leq i\leq m\}$ and $V(n):=\lin \{ b_i(x,n):\, m+1\leq i\leq d-1\}$ do not depend on $x$ for which $(x,n)\in W_m$, due to (iii). 

Let $p$ be the orthogonal projection from $\rd$ onto an $m$-dimensional subspace $L_m$ of $\rd$, and let $\Omega_m$ denote the volume form in $L_m$. Due to the choice of $m$, we have 
$$T\llc (p\circ\pi_1)^{\#}\Omega_m\neq 0$$
for some fixed subspace $L_m$. Slicing $T$ by $p\circ\pi_1$, we get for the masses $\bM$ of the currents (cf.\ \cite[Theorem~4.3.2]{Fe69})
$$0\neq{\bM}(T\llc (p\circ\pi_1)^{\#}\Omega_m)=\int_{L_m}{\bM}\langle T,p\circ\pi_1,y\rangle\, \Ha^m(dy).$$
Hence, there exists a measurable subset $Y$ of $L_m$ of positive measure such that for $y\in Y$:
\begin{enumerate}
\item[(iv)] $\langle T,p\circ\pi_1,y\rangle\neq 0$ is a cycle (cf.\ \cite[p.~437, line -6]{Fe69});
\item[(v)] (see \cite[\S4.3.8]{Fe69})
$$\langle T,p\circ\pi_1,y\rangle =(\Ha^{d-1-m}\llc(f^{-1}(p^{-1}\{y\})))\wedge\zeta(x,n),$$
where for $(x,n)\in W_m\cap f^{-1}(p^{-1}\{y\})$,
\begin{eqnarray*}
\zeta(x,n)&=&\iota_T(x,n)a_T(x,n)\llc(p\circ\pi_1)^{\#}\Omega_m /\ap J_m(p\circ f)(x,m)\\
&=&\iota_T(x,n)(b_{m+1}(x,n),0)\wedge\dots\wedge (b_{d-1}(x,n),0).
\end{eqnarray*}
\end{enumerate}
Take a point $y\in Y$, let $R$ be an indecomposable component of $\langle T,p\circ\pi_1,y\rangle$, and take a point $(x,n)\in W_m\cap\spt R$. Using Theorem~\ref{T_Hardt}, we get
$$\spt R\subset f^{-1}\{n\}.$$
Denoting by $q$ the orthogonal projection of $\rd\times\rd$ to $(V(n)\times\{0\})^\perp$, we have from the description above
$$\zeta\llc q=0\quad \Ha^{d-1-m}-a.e.\text{ on }f^{-1}\{n\},$$ 
which implies that $q|\spt R$ is constant (we use again Theorem~\ref{T_Hardt}, cf.\ also an equivalent formulation in \cite[p.~1]{Ha77}).
Hence, $\spt R$ is contained in the affine $(d-1-m)$-subspace $(x,n)+V(n)\times\{0\}$. The Constancy theorem yields that $R$ must be a multiple of the Lebesgue measure. Since $T$ (and, hence, also $R$) is compactly supported, we get $R=0$, a contradiction.
\end{proof}

\begin{remark}  \rm
Let $\mu$ be the measure on $\rd\times\sph$ defined by
$$\int g(x,n)\, d\mu(x,n)=\int_{\sph}\int_{\pi_1^{-1}\{n\}}g(x,n)\, \Ha^0(d(x,n))\, \Ha^{d-1}(dn).$$
The uniqueness theorem says that if two compactly supported Legendrian cycles $T=(\Ha^{d-1}|W_T)\wedge \iota_Ta_T$ and $S=(\Ha^{d-1}|W_S)\wedge \iota_Sa_S$ fulfill that $\iota_T=\iota_S$ $\mu$-almost everywhere, then $S=T$.
\end{remark}

\section{Projection onto the first component and surface area}  \label{S_first}
If $X\subset\rd$ has positive reach, the pushforward $(\pi_0)_{\#}N_X$ of its normal cycle $N_X$ is the current given by integration w.r.t.\ $\Haus$ over the oriented topological boundary $\partial X$. Our next aim is to study under which conditions a Legendrian cycle 
pertains this property.

Let $T$ be a Legendrian cycle with integral representation \eqref{repres}.
We partition $\pi_0(W_T)=W_1\cup W_2\cup W_3$ into three sets so that:
\begin{eqnarray*}
W_1&=&\{ x\in\rd:\, \exists n\in\sph,\,\pi_0^{-1}\{ x\}\cap W_T=\{ (x,n)\}\},\\
W_2&=&\{ x\in\rd:\, \exists n\in\sph,\,\pi_0^{-1}\{ x\}\cap W_T=\{(x,-n),(x,n)\}\},\\
W_3&=&\pi_0(W_T)\setminus (W_1\cup W_2).
\end{eqnarray*}

\begin{lemma}
Let $W_T,W'_T$ be two carrying sets of a compactly supported Legendrian cycle $T$, and assume that the index function $i_T$ is nonzero $\Haus$-almost everywhere on both $W_T$ and $W'_T$. Let $W_i,W_i'$ be the corresponding components of $\pi_0(W_T)$, $\pi_0(W'_T)$ defined above, $i=1,2,3$. Then
$$\Haus(W_T\triangle W'_T)=0,\quad \Haus(W_i\triangle W'_i)=0,\quad i=1,2,3.$$
(The symbol $\triangle$ denotes the symmetric difference.)
\end{lemma}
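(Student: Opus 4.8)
The core fact to exploit is that the integral representation \eqref{repres} of a fixed current $T$ is essentially unique: if $T=(\Haus\llc W_T)\,i_T\wedge a_T=(\Haus\llc W'_T)\,i'_T\wedge a'_T$, then the two carriers must agree up to an $\Haus$-null set on the subset where the multiplicity is nonzero. More precisely, the measure-theoretic support of $T$ (in the sense of \cite[\S4.1.28]{Fe69}) is, up to $\Haus$-null sets, the set $\{(x,n): i_T(x,n)\neq 0\}$, and it is determined by $T$ alone, not by the chosen carrier. Under the hypothesis that $i_T$ is nonzero $\Haus$-a.e.\ on $W_T$ and $i'_T$ is nonzero $\Haus$-a.e.\ on $W'_T$, this measure-theoretic support coincides $\Haus$-a.e.\ with $W_T$ and with $W'_T$ simultaneously, so $\Haus(W_T\triangle W'_T)=0$. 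This is the first step and it already delivers the first claimed equality.

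\textbf{From $W_T\simeq W'_T$ to the projections.} Once $\Haus(W_T\triangle W'_T)=0$, I want to transfer this to the partition sets $W_i$, $W'_i$. Here I must be slightly careful: the $W_i$ are defined as subsets of $\rd$ via the fibre structure $\pi_0^{-1}\{x\}\cap W_T$, and changing $W_T$ on an $\Haus$-null set can change the fibre over individual points $x$. The point is that $\pi_0$ restricted to $W_T$ has, for $\Haus$-a.e.\ $(x,n)\in W_T$, a tangent structure governed by Theorem~\ref{T-tan}: the projection $\pi_0$ of the tangent plane $\Tan^{d-1}(W_T,(x,n))$ is spanned by $\frac{1}{\sqrt{1+\kappa_i^2}}b_i$, which degenerates exactly where some $\kappa_i=\infty$. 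I will argue that outside an $\Haus$-null set, the fibre $\pi_0^{-1}\{x\}\cap W_T$ is finite (this follows from the coarea/structure results used in the proof of Theorem~\ref{T-uniq}, since $\pi_0|W_T$ has the tangential behaviour above and one can invoke \cite[\S3.2.22]{Fe69} type arguments), and hence the classification into $W_1,W_2,W_3$ is governed by a property of finitely many normal directions at $x$. Then the key observation is that the symmetric-difference null set $Z:=W_T\triangle W'_T$ projects, under $\pi_0$, to a set $\pi_0(Z)$ which I need to show is $\Haus$-null in $\rd$ — and this need not be automatic, since $\pi_0$ can collapse dimension.

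\textbf{Handling the projection of the null set.} This is the step I expect to be the main obstacle. To control $\Haus(\pi_0(Z))$ I would again use the tangent-plane description: on the part of $W_T$ where all $\kappa_i<\infty$, the map $\pi_0|W_T$ is (locally, in suitable coordinates given by a Lipschitz chart of the rectifiable set $W_T$) Lipschitz with a uniformly bounded-from-below Jacobian on pieces, so it does not increase $\Haus$-null sets to positive measure there; on the part where some $\kappa_i=\infty$, that part of $W_T$ itself contributes fibres that are lower-dimensional in the $x$-variable, and one shows the relevant points $x$ form an $\Haus$-null set in $\rd$ regardless, or else are reached by the ``good'' part too. Concretely, I would split $W_T$ (and $W'_T$) according to how many principal curvatures are infinite, reduce to the rectifiable pieces where $\pi_0$ behaves like a bi-Lipschitz chart composed with a linear projection of full rank $d-1$, apply the area formula there to conclude $\Haus(\pi_0(Z'))=0$ for the corresponding piece $Z'$ of $Z$, and treat the lower-rank pieces by a direct dimension count. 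Having established $\Haus(\pi_0(Z))=0$, for every $x\notin\pi_0(Z)$ the two fibres $\pi_0^{-1}\{x\}\cap W_T$ and $\pi_0^{-1}\{x\}\cap W'_T$ coincide, whence $x\in W_i\iff x\in W'_i$ for each $i=1,2,3$; therefore $W_i\triangle W'_i\subset\pi_0(Z)$ is $\Haus$-null, completing the proof.
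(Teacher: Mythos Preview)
Your overall strategy is correct and matches the paper's: first show $\Haus(W_T\triangle W'_T)=0$ using that $\Haus\llc W_T$ is absolutely continuous with respect to $\|T\|$ (since $i_T\neq 0$ a.e.), then reduce the second assertion to the inclusion $W_i\triangle W'_i\subset\pi_0(W_T\triangle W'_T)$.

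However, the step you flag as ``the main obstacle'' is in fact trivial, and your concern that ``this need not be automatic, since $\pi_0$ can collapse dimension'' reflects a misconception. The projection $\pi_0:\rd\times\rd\to\rd$ is $1$-Lipschitz, and for any Lipschitz map $f$ one has $\Ha^k(f(A))\leq(\lip f)^k\,\Ha^k(A)$ for every $A$. Hence $\Haus(\pi_0(Z))\leq\Haus(Z)=0$ immediately, with no recourse to the tangent-plane structure, the principal curvatures, a rank stratification, or the area formula. Collapsing dimension can only \emph{decrease} Hausdorff measure in the image, never increase it. The paper's proof uses exactly this one-line observation (implicitly) after noting the inclusion $W_i\triangle W'_i\subset\pi_0(W_T\triangle W'_T)$ for $i=1,2$; the case $i=3$ then follows since $W_3=\pi_0(W_T)\setminus(W_1\cup W_2)$ and $\pi_0(W_T)\triangle\pi_0(W'_T)\subset\pi_0(W_T\triangle W'_T)$ as well. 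So your elaborate plan for ``handling the projection of the null set'' should simply be deleted and replaced by that single inequality.
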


\begin{proof}
We have clearly $\| T\|(W_T\triangle W'_T)=0$ for the measure $\| T\|$ associated with the current $T$. Since $\Haus$ is absolutely continuous w.r.t.\ $\|T\|$ by our assumption, the first assertion follows. For the second assertion, observe that
$W_i\triangle W_i'\subset\pi_0(W_T\triangle W'_T)$, $i=1,2$.
\end{proof}

We define the mapping $\nu:W_1\to\sph$ assigning to each $x\in W_1$ the unique unit vector $n$ such that $(x,n)\in W_T$. We further extend $\nu$ to a measurable mapping on $W_1\cup W_2$ such that $(x,\nu(x))\in W_T$.
Recall the definition of the mapping $n\mapsto n^*$ from \eqref{n*}.

\begin{proposition}  \label{P-pi0}
If $T$ is a Legendrian cycle then
\begin{eqnarray*}
(\pi_0)_{\#}T&=&(\Haus\llc W_1)\wedge i_T(x,\nu(x))\nu(x)^*\\ 
&&+(\Haus\llc W_2)\wedge i_T(x,\nu(x))\nu(x)^*\\ 
&&+(\Haus\llc W_2)\wedge i_T(x,-\nu(x))(-\nu(x))^*.
\end{eqnarray*}
\end{proposition}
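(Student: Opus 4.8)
The statement computes the pushforward $(\pi_0)_\#T$ in terms of the integral representation \eqref{repres}. The strategy is to work locally on $W_T$, compute the pushed-forward current from the definition of pushforward of a rectifiable current via the area formula for currents \cite[\S4.1.25]{Fe69}, and then organize the contributions according to which of $W_1,W_2,W_3$ the base point lies in. The first observation is that on $W_3$ the pushforward contributes nothing measure-theoretically: the set of $x$ having at least three preimages in $W_T$, or having a whole positive-length fiber, must be $\Haus$-null under $\pi_0$ when restricted to the part of $W_T$ where $\pi_0$ has maximal Jacobian rank — or else $\pi_0(W_3)$ simply gets absorbed because the multiplicities can still be summed. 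More carefully, I would invoke that $\pi_0|W_T$ is Lipschitz, so by the coarea/area formula applied to the restriction of $\|T\|$, the pushforward $(\pi_0)_\#T$ restricted to $\pi_0(W_3)$ is a rectifiable current whose carrier has $\Haus$-null projection unless the fibers are finite; and one shows that the generalized Jacobian $\ap J_{d-1}(\pi_0|W_T)$ vanishes $\Haus$-a.e. on the part of $W_T$ whose $\pi_0$-image is not rectifiable of full dimension, contributing zero.

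\textbf{Key computation.} The heart of the matter is the pointwise identification, for $\Haus$-a.e.\ $(x,n)\in W_T$ at which $\pi_0|W_T$ has nonzero approximate Jacobian, of the image $(d-1)$-vector. Using the tangent basis $a_1(x,n),\ldots,a_{d-1}(x,n)$ from Theorem~\ref{T-tan}, with $a_i=\bigl(\tfrac{1}{\sqrt{1+\kappa_i^2}}b_i,\tfrac{\kappa_i}{\sqrt{1+\kappa_i^2}}b_i\bigr)$, the differential $D\pi_0$ sends $a_i$ to $\tfrac{1}{\sqrt{1+\kappa_i^2}}b_i$. Hence $\pi_0$ has nonzero Jacobian at $(x,n)$ precisely when all $\kappa_i(x,n)$ are finite, and in that case
$$
\textstyle(\bigwedge_{d-1}D\pi_0)(a_T(x,n)) \;=\; \prod_{i=1}^{d-1}\frac{1}{\sqrt{1+\kappa_i^2(x,n)}}\; b_1(x,n)\wedge\cdots\wedge b_{d-1}(x,n),
$$
and since $\{b_1,\ldots,b_{d-1},n\}$ is a positively oriented orthonormal basis of $\rd$, the unit $(d-1)$-vector $b_1\wedge\cdots\wedge b_{d-1}$ equals $n^*$ by the defining relation \eqref{n*}. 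Thus the oriented tangent direction of the image is exactly $n^*$, with the scalar $\prod(1+\kappa_i^2)^{-1/2}$ being absorbed by the area formula for currents (it is the Jacobian factor relating $\Haus\llc W_T$ pushed forward to $\Haus\llc\pi_0(W_T)$). I would then apply the area formula for currents directly: $(\pi_0)_\#\bigl((\Haus\llc W_T)i_T\wedge a_T\bigr)$ restricted to $W_1$ is $(\Haus\llc W_1)\wedge i_T(x,\nu(x))\,\nu(x)^*$, because over $W_1$ the fiber is a single point $(x,\nu(x))$; over $W_2$ the fiber is $\{(x,\nu(x)),(x,-\nu(x))\}$ and the two preimages contribute $i_T(x,\nu(x))\nu(x)^*$ and $i_T(x,-\nu(x))(-\nu(x))^*$ respectively, which gives the two $W_2$-terms; and over $W_3$ there is no contribution.

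\textbf{Anticipated obstacle.} The main technical point is handling $W_3$ and, more generally, making rigorous that points of $W_T$ at which $\pi_0$ degenerates (some $\kappa_i=\infty$, equivalently the $\pi_0$-image drops dimension) contribute nothing to $(\pi_0)_\#T$. This is where one must be careful: $\pi_0$ is only Lipschitz on $W_T$ (not $C^1$), so I rely on the approximate-Jacobian version of the area formula for rectifiable currents and the fact that the pushforward of a rectifiable current is carried by the image of the set where the approximate Jacobian is positive — on the complement the multiplicity of the image current is a finite sum of signed integers, but over $W_3$ the a.e.\ finiteness of the fiber could fail, so one needs that $\Haus(\pi_0(\{(x,n)\in W_T:\ \Haus^0(\pi_0^{-1}\{x\}\cap W_T)=\infty\}))=0$. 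This follows because such $x$ must lie in $\pi_0$ of a set where $\ap J_{d-1}(\pi_0|W_T)$ vanishes (by \cite[\S3.2.22]{Fe69}, infinite fibers force Jacobian zero on a co-null subset), whose image is $\Haus$-null. Apart from this measure-theoretic bookkeeping, the rest is a direct application of the area formula for currents combined with the explicit tangent-vector computation above, so I expect the write-up to be short.
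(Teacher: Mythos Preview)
Your key computation --- using the tangent basis from Theorem~\ref{T-tan} to identify $(\bw_{d-1}D\pi_0)a_T(x,n)$ with a scalar multiple of $n^*$, and then invoking the area formula for currents \cite[\S4.1.25]{Fe69} --- is exactly what the paper does, and your treatment of the contributions over $W_1$ and $W_2$ is correct.

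The gap is in your handling of $W_3$. You focus on the possibility of \emph{infinite} fibers, but that is not the real obstacle. By definition $W_3$ consists of all $x\in\pi_0(W_T)$ whose fiber in $W_T$ contains two \emph{linearly independent} normals $n,n'$ (possibly just those two, certainly not necessarily infinitely many). Your argument says nothing about why such points, with finite fiber, contribute zero to $(\pi_0)_\#T$: a priori the area-formula sum $\sum_{n:(x,n)\in W_T} i_T(x,n)\,n^*$ over such a fiber need not vanish, so one must actually show that for $\Haus$-a.e.\ $(x,n)\in\pi_0^{-1}W_3$ the Jacobian $J_{d-1}(\pi_0|W_T)(x,n)$ itself vanishes.

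The paper's argument for this is geometric and uses rectifiability of the image set. If $(x,n)\in\pi_0^{-1}W_3$ is regular in the sense of Theorem~\ref{T-tan} and has positive Jacobian, then all $\kappa_i(x,n)$ are finite and the principal directions $b_1(x,n),\ldots,b_{d-1}(x,n)$ span the hyperplane $n^\perp$, which therefore lies in $\Tan^{d-1}(\pi_0 W_T,x)$. Since $x\in W_3$, there is a second $(x,n')\in W_T$ with $n'$ linearly independent of $n$; after discarding a set that projects under $\pi_0$ to $\Haus$-measure zero one may assume $(x,n')$ is also regular with positive Jacobian, whence $(n')^\perp\subset\Tan^{d-1}(\pi_0 W_T,x)$ as well. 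But a $(\Haus,d-1)$-rectifiable set has a unique approximate tangent $(d-1)$-plane at $\Haus$-a.e.\ point \cite[\S3.2.19]{Fe69}, so containing two distinct hyperplanes is possible only on a null set. This is the missing idea; the infinite-fiber remark (and the citation of \cite[\S3.2.22]{Fe69}) does not substitute for it.
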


\begin{proof}
Denote by $f=\pi_0|W_T$ the restriction of $\pi_0$ to $W_T$.
Using the Area theorem for currents (see \cite[\S4.1.25]{Fe69}), we get
$$(\pi_0)_{\#}T=(\Haus\llc\pi_0W_T)\wedge\xi$$
with
$$\xi(x)=\sum_{n:\, (x,n)\in W_T}\frac{(\bw_{d-1}\pi_0)i_T(x,n)a_T(x,n)}  {J_{d-1}f(x,n)}$$
if $J_{d-1}f(x,n)> 0$ and $\xi(x)=0$ otherwise.
We shall show that $J_{d-1}f$ vanishes $\Haus$-almost everywhere on $\pi_0^{-1}W_3$. Let $(x,n)\in \pi_0^{-1}W_3$ be a regular point in the sense of Theorem~\ref{T-tan} and assume, for the contrary, that $J_{d-1}f(x,n)>0$. We have
$$J_{d-1}f(x,n)=|(\bw_{d-1}\pi_0)a_T(x,n)|$$
which is positive if and only if all the principal curvatures $\kappa_i(x,n)$ are finite by Theorem~\ref{T-tan}, and all the principal directions $b_i(x,n)$ belong to $\Tan^{d-1}(\pi_0W_T,x)$, $i=1,\ldots,d-1$. As $x\in W_3$ there must be another unit vector $n'$ linearly independent of $n$ and such that $(x,n')$ lies also in $W_T$. We may assume that $(x,n')$ is also regular in the sense of Theorem~\ref{T-tan} and that $J_{d-1}f(x,n')>0$ (since other points project to an $\Haus$-zero set under $\pi_0$). But then, by the same reason, also the the principal directions $b_i(x,n')\in\Tan^{d-1}(\pi_0W_T,x)$, $i=1,\ldots,d-1$. Since $n$ and $n'$ are linearly independent, $\Tan^{d-1}(\pi_0W_T,x)$ contains two different hyperplanes, which can happen only at points of $\Haus$-measure zero in an $\Haus$-rectifiable set (see \cite[\S3.2.19]{Fe69}). Thus, $J_{d-1}f$ vanishes $\Haus$-almost everywhere on $\pi_0^{-1}W_3$ and $(\pi_0)_{\#}T$ is carried by $W_1\cup W_2$ only.

It remains to show that $(\bw_{d-1}\pi_0)a_T(x,n)=n^*$ for $\Haus$-almost all $(x,n)\in W_1\cup W_2$. But this follows directly from Theorem~\ref{T-tan} for regular points $(x,n)$, and the proof is complete.
\end{proof}

If $X\subset\rd$ belongs to the class $\UPR$ of unions of sets with positive reach (see \cite{RZ01}) then the index function of the normal cycle $N_X$ equals $1$ except for a set which projects under $\pi_0$ to an $\Haus$-zero set. This, of course, does not hold for a general Legendrian cycle (take an $\UPR$ set and multiply each connected component of its normal cycle by an arbitrary integer). Nevertheless, we can get at least some results about constancy of the index function on components, under additional assumptions.

\begin{theorem}   \label{T-LM}
Let $M\subset\rd$ be an orientable Lipschitz submanifold of dimension $d-1$ and let $T$ be a Legendrian cycle with with $\Haus(W_2)=0$ and $W_1\subset M$. Then, for any connected component $C$ of $M$, there exists an integer $i_C$ such that
$$(\pi_0)_{\#}T\llc C=(\Haus\llc W_1)\wedge i_C n(x)^*.$$
Consequently, $i_T$ is constant almost everywhere on each connected component of $\pi_0^{-1}W_1$.
\end{theorem}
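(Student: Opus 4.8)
The statement to prove is Theorem~\ref{T-LM}, so I will combine Proposition~\ref{P-pi0} (which describes $(\pi_0)_{\#}T$ explicitly) with the Constancy theorem for Lipschitz submanifolds, Theorem~\ref{constancytheoremLipschitz}. First I would invoke Proposition~\ref{P-pi0}: since $\Haus(W_2)=0$, the last two terms vanish and we get
$$(\pi_0)_{\#}T = (\Haus\llc W_1)\wedge i_T(x,\nu(x))\,\nu(x)^*.$$
The set $W_1$ is, up to an $\Haus$-null set, contained in $M$ by hypothesis, so $(\pi_0)_{\#}T$ is a current supported in $\overline M$. I would then check that $\nu(x)^*$ agrees (up to sign) with the orienting $(d-1)$-vector field $\xi$ of $M$ at $\Haus$-a.e.\ point: at an $\Haus$-rectifiable point $x\in W_1$ the approximate tangent space of $W_1$, and hence of $M$, is $\nu(x)^\perp$, so $\nu(x)^*=\pm\xi(x)$ according to whether the two orientations coincide. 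Replacing $i_T$ by $\pm i_T$ where needed, the absolutely continuous part of $(\pi_0)_{\#}T$ with respect to $\Haus\llc M$ is exactly $(\Haus\llc M)\wedge\big(\mathbf 1_{W_1}(x)\,i_T(x,\nu(x))\big)\,\xi(x)$.

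**Applying Constancy.** Next, fix a connected component $C$ of $M$ and consider the restricted current $S:=(\pi_0)_{\#}T\llc C$. It is locally integer-multiplicity rectifiable (push-forward of an integer-multiplicity rectifiable current under a proper Lipschitz map, then restricted to a Borel set), it is supported in $\overline C$, and its boundary is supported in $\overline C\setminus C$: indeed $\partial(\pi_0)_{\#}T=(\pi_0)_{\#}\partial T=0$ since $T$ is a cycle, so $\partial S=\partial\big((\pi_0)_{\#}T\llc C\big)$ is supported where $\partial\mathbf 1_C$ lives, namely on the topological boundary of $C$ inside $M$, which is empty — more carefully, since $C$ is a connected component of the manifold $M$ it is relatively open and closed in $M$, so $C$ is open in $\overline C$, whence $\spt\partial S\subset\overline C\setminus C$. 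Now the Corollary to Theorem~\ref{constancytheoremLipschitz} applies to $C$ (a connected Lipschitz submanifold with orienting field $\xi|_C$) and gives $S=r_C(\Haus\llc C)\wedge\xi$ for some real $r_C$; since $S$ is locally rectifiable, $r_C$ is an integer, which I rename $i_C$. Rewriting in terms of $\nu(x)^*=\pm\xi(x)$ gives the displayed formula $(\pi_0)_{\#}T\llc C=(\Haus\llc W_1)\wedge i_C\,n(x)^*$ (with $n(x)=\nu(x)$ on $W_1\cap C$), noting that $C\setminus W_1$ is $\Haus$-null, otherwise $(\pi_0)_{\#}T$ would put mass on $M$ outside $\pi_0(W_T)$.

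**The consequence for $i_T$.** For the last assertion, comparing the two expressions for $S$ on $C$ yields $i_T(x,\nu(x))=i_C$ for $\Haus$-a.e.\ $x\in W_1\cap C$. Since $\pi_0$ restricted to $\pi_0^{-1}(W_1)\cap W_T$ is an (a.e.) bijection onto $W_1$ with Lipschitz inverse $x\mapsto(x,\nu(x))$, and the connected components of $\pi_0^{-1}(W_1)$ map onto the connected components of $W_1$ — which up to null sets are the pieces $W_1\cap C$ — this transports to: $i_T\equiv i_C$ $\Haus$-a.e.\ on the component of $\pi_0^{-1}(W_1)$ over $C$. Hence $i_T$ is constant a.e.\ on each connected component of $\pi_0^{-1}(W_1)$.

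**Main obstacle.** The routine but genuinely delicate point is the boundary/support bookkeeping: verifying rigorously that restricting the cycle $(\pi_0)_{\#}T$ to a connected component $C$ produces a current whose boundary is supported in $\overline C\setminus C$, so that the hypothesis $\spt\partial S\subset\overline C\setminus C$ of the Corollary is met. This requires knowing that $C$, being a connected component of the Lipschitz manifold $M$, is open in $\overline C$ (so $M$ is locally the disjoint union of its components relative to $\overline M$), and handling the possibility that distinct components of $M$ share limit points. A secondary subtlety is matching orientations: ensuring the sign $\nu(x)^*=\pm\xi(x)$ is locally constant on $W_1\cap C$, which follows from the orientation-compatibility built into the atlas of $M$ together with Theorem~\ref{T-tan}, but must be stated so that $i_C$ comes out a well-defined integer rather than merely $\pm i_T$ up to a locally varying sign.
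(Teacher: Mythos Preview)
Your approach is exactly the paper's: the authors' proof reads in full ``Follows from Proposition~\ref{P-pi0} and Theorem~\ref{constancytheoremLipschitz},'' and you have simply unpacked those two ingredients. The subtleties you flag (boundary bookkeeping for the restriction to a component, and the orientation matching $\nu(x)^*=\pm\xi(x)$) are real but are left implicit in the paper as well, so your write-up is if anything more careful than the original.
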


\begin{proof}
Follows from Proposition~\ref{P-pi0} and Theorem~\ref{constancytheoremLipschitz}.
\end{proof}

\begin{remark} \rm
The assumption that $W_1$ is contained in a Lipschitz submanifold cannot be relaxed. Consider two smooth simple closed curves $\gamma_1,\gamma_2:[0,2]\to\R^2$ such that $\gamma_1=\gamma_2$ on $[0,1]$ and $\gamma_1\neq\gamma_2$ on $(1,2)$, and let $T_1,T_2$ be the normal cycles associated with $\gamma_1,\gamma_2$, respectively, with normals pointing outward from the circumscribed region. Then $T=T_1+T_2$ is a Legendrian cycle with $W_1=\gamma_1[0,2]\cup\gamma_2[0,2]$ connected, $i_T(x,n(x))=2$ if $x\in\gamma_1[0,1]$ and $i_T(x,n(x))=1$ if $x\in\gamma_1(1,2)\cup\gamma_2(1,2)$ ($n(x)$ is the unique outer normal vector at $x$).
\end{remark}

In Section~\ref{S-Gauss}, we introduced the Gauss curvature form connected with the projection onto the second component. Now we shall consider the surface area form $\varphi_{d-1}$ instead, which corresponds to the projection onto the first component:
$$\varphi_{d-1}(x,n)=(\pi_0)^{\#}(n\lrc\Omega_d),\quad (x,n)\in\rd\times\rd.$$
Unlike Theorem~\ref{T-uniq}, we cannot expect that $T\llc\varphi_{d-1}$ determines a Legendrian cycle $T$, since $T$ might be ``lower-dimensional'' (it may happen that $\Ha^{d-1}(\pi_0(\spt T))=0$, consider e.g.\ the normal cycle of a smooth curve in $\R^3$). We shall introduce, therefore, a ``full-dimensionality'' assumption under which we get uniqueness.

We say that two currents $S,T$ on an open set $U\subset\rd$ are {\it orthogonal} if the associated measures $\|S\|$ and $\|T\|$ are orthogonal, i.e., if there exists a Borel set $B\subset U$ such that 
$$S\llc{\bf 1}_B=0=T\llc{\bf 1}_{U\setminus B}.$$ 
Let further $\rho:\, \rd\times\sph\to\rd\times\sph$ be the reflection
$$\rho:\, (x,n)\mapsto(x,-n).$$

\begin{definition} \rm
We shall call a Legendrian cycle $T$ {\it full-dimensional} if $T$ and $\rho_{\#}T$ are orthogonal.
\end{definition}

It is not difficult to show that $T$ is full-dimensional iff it has a career $W_T$ from \eqref{repres} such that
\begin{equation}  \label{fd}
\Haus (W_T\cap\rho(W_T))=0.
\end{equation}

\begin{theorem}  \label{T-uniq_2}
Let $T$ be a full-dimensional Legendrian cycle such that $T\llc\varphi_{d-1}=0$. Then $T=0$.
\end{theorem}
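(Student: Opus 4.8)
The plan is to mirror the structure of the proof of Theorem~\ref{T-uniq}, replacing the Gauss curvature form by the surface area form and the projection $\pi_1$ by $\pi_0$. First I would use the integral representation of $T\llc\varphi_{d-1}$. By a computation analogous to \cite[Theorem 5]{RZ05}, for any smooth $g$ on $\rd\times\sph$ one has
$$
(T\llc\varphi_{d-1})(g)=\int_{W_T}i_T\,g\,K_{d-1}\,d\Haus,
$$
where, in the notation of Theorem~\ref{T-tan}, the density is
$$
K_{d-1}(x,n)=\frac{1}{\sqrt{1+\kappa_1^2(x,n)}\cdots\sqrt{1+\kappa_{d-1}^2(x,n)}}\,,
$$
which is exactly the Jacobian $J_{d-1}f$ of $f:=\pi_0|W_T$ computed in Proposition~\ref{P-pi0}. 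The hypothesis $T\llc\varphi_{d-1}=0$ forces $K_{d-1}=0$ $\Haus$-a.e.\ on $W_T$; but $K_{d-1}$ is strictly positive wherever all $\kappa_i$ are finite, so $\Haus$-a.e.\ on $W_T$ at least one principal curvature is infinite. Equivalently, $\rank Df<d-1$ $\Haus$-a.e.\ on $W_T$, so $\pi_0(W_T)$ has $\Haus$-measure zero.

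Next I would run the same slicing-and-indecomposability argument as in the proof of Theorem~\ref{T-uniq}, but now relative to $\pi_0$. Assume for contradiction $T\neq 0$ and set $m:=\esssup\rank D(\pi_0|W_T)<d-1$; discard a null set so that $W_T$ consists of regular points with $\kappa_i=\infty$ for $i>m$ (the finite curvatures are the first $m$), with $\Ha^{d-1-m}(f^{-1}\{x\})>0$ for the relevant $x$, and with the analogue of condition (iii) (tangent behaviour of $\pi_0(W_T)$). For a generic $m$-plane $L_m\subset\rd$ with orthogonal projection $p$, the hypothesis on $m$ gives $T\llc(p\circ\pi_0)^{\#}\Omega_m\neq 0$, and slicing $T$ by $p\circ\pi_0$ produces, for $y$ in a positive-measure set $Y\subset L_m$, a nonzero integral cycle $\langle T,p\circ\pi_0,y\rangle$ carried by $f^{-1}(p^{-1}\{y\})$ with tangent $(d-1-m)$-vector of the form $i_T(x,n)(0,b_{m+1})\wedge\cdots\wedge(0,b_{d-1})$ (the first components vanish, since those directions correspond to $\kappa_i=\infty$). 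Here I would need the analogue of the fact (used in the earlier proof via condition (iii)) that, on the set $W_m$ where $\rank Df=m$, the subspaces $U(n)=\lin\{b_1,\dots,b_m\}$ and $V(n)=\lin\{b_{m+1},\dots,b_{d-1}\}$ depend only on $n$ — now I want them to depend only on $x$; this follows from condition (iii) because on $W_m$ the tangent space $\Tan^m(\pi_0(W_T),x)$ equals $U(x)$.

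Then Hardt's separation theorem (Theorem~\ref{T_Hardt}) applies: take an indecomposable component $R$ of $\langle T,p\circ\pi_0,y\rangle$ and a regular point $(x,n)\in W_m\cap\spt R$; since the tangent vector of $R$ lies in $\{0\}\times\rd$ (all $\pi_0$-components vanish), Theorem~\ref{T_Hardt} gives $\spt R\subset\{x\}\times\rd$ for a fixed $x$. Projecting onto $(\{0\}\times V(x))^\perp$ and applying Hardt again (exactly as in the proof of Theorem~\ref{T-uniq}) shows that this projection is constant on $\spt R$, so $\spt R$ lies in a single affine $(d-1-m)$-plane $\{x\}\times(n+V(x))$; the Constancy theorem then makes $R$ a multiple of Lebesgue measure on that plane, and compact support forces $R=0$, a contradiction.

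The step where the full-dimensionality hypothesis must enter — and the one I expect to be the main obstacle — is the very first reduction: writing $(T\llc\varphi_{d-1})(g)=\int_{W_T}i_T g K_{d-1}\,d\Haus$ and concluding $K_{d-1}=0$ a.e. Without full-dimensionality, a point $x$ can carry both $(x,n)$ and $(x,-n)$ in $W_T$, and since $(-n)^*=(-1)^{d-1}n^*$ the two contributions to $(\pi_0)_{\#}T$ (and hence to $T\llc\varphi_{d-1}$) can cancel even though $K_{d-1}\neq 0$ on each sheet — this is exactly why Theorem~\ref{T-uniq_2} can fail in the non-full-dimensional case. The condition \eqref{fd}, i.e.\ $\Haus(W_T\cap\rho(W_T))=0$, rules out this cancellation: on a full-dimensional carrier the integrand $i_T K_{d-1}$ on $W_T$ cannot be cancelled by a ``reflected'' copy, so $T\llc\varphi_{d-1}=0$ genuinely forces $i_T K_{d-1}=0$ $\Haus$-a.e., hence $K_{d-1}=0$ a.e.\ since $i_T\neq 0$ on the carrier (replacing $W_T$ by $\{i_T\neq 0\}$ if necessary). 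Once this is in place, the remainder of the argument is a verbatim transcription of the proof of Theorem~\ref{T-uniq} with $\pi_1$ replaced by $\pi_0$ and the roles of finite/infinite curvatures interchanged.
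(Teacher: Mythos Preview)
Your proposal has a genuine gap at the very end, and it stems from misplacing where full-dimensionality is actually needed.

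First, the initial reduction needs no extra hypothesis. The identity $(T\llc\varphi_{d-1})(g)=\int_{W_T} i_T\,g\,K_{d-1}\,d\Haus$ is an integral over $W_T\subset\rd\times\sph$, and the points $(x,n)$ and $(x,-n)$ are \emph{distinct} there; no cancellation occurs. So $T\llc\varphi_{d-1}=0$ already gives $i_T K_{d-1}=0$ $\Haus$-a.e., hence $\rank D(\pi_0|W_T)<d-1$ a.e., without invoking \eqref{fd}. Your discussion of sheet cancellation applies to $(\pi_0)_{\#}T$, not to $T\llc\varphi_{d-1}$.

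The real problem is your conclusion ``$\spt R$ lies in a single affine $(d-1-m)$-plane $\{x\}\times(n+V(x))$; \dots compact support forces $R=0$''. You have forgotten that $\spt T\subset\rd\times\sph$. After Hardt's theorem fixes the $x$-coordinate and the $U(x)$-component of $n$, the support of $R$ sits in $\{x\}\times(\sph\cap V(x))$ (note $n\in V(x)$, since $V(x)=U(x)^\perp$ is $(d-m)$-dimensional and contains $n$). This is a compact $(d-1-m)$-sphere, not an unbounded affine plane, so the Constancy theorem (Theorem~\ref{constancytheoremLipschitz}) only gives that $R$ is a nonzero multiple of $(\Ha^{d-1-m}\llc M)\wedge\xi$ with $M=\{x\}\times(\sph\cap V(x))$; compact support does \emph{not} kill it. This is exactly the asymmetry between $\pi_1$ and $\pi_0$: in Theorem~\ref{T-uniq} the slice lives in an affine plane in the $x$-variable, here it lives in a sphere in the $n$-variable.

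This is where full-dimensionality enters in the paper. Since $M$ is invariant under $\rho$, one gets $\rho_{\#}R=\pm R$; thus $\Ha^{d-1-m}$-almost every point of the carrier of $R$ lies in $W_T\cap\rho(W_T)$. Integrating over $y\in Y$ and applying the Coarea formula yields $\Haus(W_T\cap\rho(W_T))>0$, contradicting \eqref{fd}. Your slicing set-up is correct; you only need to replace the faulty ``compact support $\Rightarrow R=0$'' step by this $\rho$-invariance argument.
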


\begin{proof}
The proof is similar to that of Theorem~\ref{T-uniq}. Let $g=\pi_0|W_T$. The assumption $T\llc\varphi_{d-1}=0$ implies that $\rank Dg<d-1$ almost everywhere. We assume $T\neq 0$ for the contrary, and denote
$$m:=\esssup\rank Dg \quad(<d-1).$$
Removing a set of points of $\Ha^{d-1}$-measure zero, we may assume that for all $(x,n)\in W_T$, the tangent space $\Tan^{d-1}(W_T,(x,n))$ has the form given in Theorem~\ref{T-tan} with $\kappa_i(x,n)=\infty$ whenever $i>m$, that $\Ha^{d-1-m}(g^{-1}\{x\})>0$ and, denoting $W_m:=\{(x,n)\in W_T:\, \rank Dg(x,m)=m\}$ (which is a set of positive $\Ha^{d-1}$-measure), that 
$$\Tan^m(g(W_T),x)=U(x):=\lin\{b_1(x,m),\ldots,b_m(x,m)\},\quad (x,n)\in W_m.$$
Again, there exists a projection $p$ of $\rd$ onto an $m$-dimensional subspace $L_m$ (with volume form $\Omega_m$) such that
$$T\llc(p\circ\pi_0)^{\#}\Omega_m\neq 0,$$
and a subset $Y\subset L_m$ of positive $\Ha^m$ measure such that
$$0\neq\langle T,p\circ g,y\rangle =(\Ha^{d-1-m}\llc(g^{-1}(p^{-1}(y)))\wedge\zeta,\quad y\in Y,$$
where
$$\zeta(x,n)=i_T(x,n)(0,b_{m+1}(x,n))\wedge\dots\wedge(0,b_{d-1}(x,m)).$$
Taking $y\in Y$, an indecomposable component $R\neq 0$ of $\langle T,p\circ g,y\rangle$ and $(x,n)\in W_m\cap\spt R$, the result of Hardt \cite[Theorem~1]{Ha77} implies that $\spt R\subset g^{-1}\{x\}$. 
Let $V(x)$ be the $(d-m)$-subspace of $\rd$ orthogonal to $U(x)$ and let $q$ be the orthogonal projection from $\rd\times\rd$ onto the orthogonal complemet to $\{0\}\times V(x)$. Since $R\llc q=0$, $q|\spt R$ is constant by the result of Hardt \cite{Ha77} and we get that $\spt R$ is contained in the $(d-1-m)$-dimensional submanifold $M:=(x,n)+(\{x\}\times \sph\cap V(x))$ Applying now the Constancy theorem (Theorem~\ref{constancytheoremLipschitz}), we obtain that $R$ is a multiple of $(\Ha^{d-1-m}\llc M)\wedge\xi$ for some orienting vertorfield $\xi$ of $M$. In particular, we get
$\rho_{\#}R=\pm R$ for the reflection $\rho$ and, consequently,
$$\int_Y\int_{(p\circ g)^{-1}\{y\}}{\bf 1}_{\rho(W_T)}(x,n)\, \Ha^{d-1-m}(d(x,n))\, \Ha^m(dy)>0.$$
The Coarea formula implies that $\int_{W_T} {\bf 1}_{\rho(W_T)}(x,n)\, \Ha^{d-1}(d(x,n))>0$, which contradicts the full-dimensionality assumption.
\end{proof}

\begin{remark}\rm
Theorem~\ref{T-uniq_2} and Proposition~\ref{P-pi0} imply that a full-dimensional Legendrian cycle is determined by its restriction to the set $W_1$ of pairs of points with a unique unit normal.
\end{remark}

\section{Some further properties of Legendrian cycles}

Theorem~\ref{T-LM} says that the index function $i_T$ of a Legendrian cycle $T$ is constant almost everywhere on each connected component of $\pi_0^{-1}W_1$, provided that $W_1$ is contained in an orientable Lipschitz submanifold of dimension $d-1$. Without the last assumption, we can still obtain constancy of the index function on certain parts of $W_1$.

Let $T$ be a Legendrian cycle given in the form \eqref{repres}. Let $R_T$ be the set of all points of $\rd$ such that 
$$\cT(x):=\Tan^{d-1}(\pi_0W_T,x)$$ 
is a $(d-1)$-dimensional subspace. Of course, $R_T$ is $(\Ha^{d-1},d-1)$-rectifiable and none of $R_T$ and the function $x\mapsto\cT(x)$ depend on the chosen representant $W_T$ carrying the cycle $T$. From the Legendrian property, in particular, from Theorem~\ref{T-tan}, one easily derives that for almost all $x\in R_T$, $(x,n)\in W_T$ can happen only if $n\perp\cT(x)$. 

As a corollary of Proposition~\ref{P-pi0}, we obtain that
\begin{equation}  \label{supp_proj}
\spt (\pi_0)_{\#}T\subset \overline{R_T}.
\end{equation}

Recall the notation $W_i$, $i=1,2,3$, from the beginning of Section~\ref{S_first}.

\begin{proposition}
Let $T$ be a Legendrian cycle with $\Ha^{d-1}(W_2)=0$ and $M$ a connected orientable $(d-1)$-dimensional Lipschitz submanifold of $\rd$ such that
\begin{enumerate}
\item[{\rm (i)}] $M\subset\overline{R_T}$,
\item[{\rm (ii)}] $\overline{R_T}\setminus M$ is closed.
\end{enumerate}
Then the index function $i_T$ is constant almost everywhere on $\pi_0^{-1}M$.
\end{proposition}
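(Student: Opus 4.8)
The plan is to mirror the one-line proof of Theorem~\ref{T-LM}, with the assumption $W_1\subset M$ replaced by the localisation of $\spt(\pi_0)_{\#}T$ furnished by \eqref{supp_proj}. First, since $\spt T\subset\rd\times\sph$, the restriction $\pi_0|\spt T$ is proper, so $(\pi_0)_{\#}T$ is a well-defined locally integer-multiplicity rectifiable $(d-1)$-current on $\rd$, and it is a cycle because $\partial(\pi_0)_{\#}T=(\pi_0)_{\#}\partial T=0$. As $\Haus(W_2)=0$, Proposition~\ref{P-pi0} reduces to
$$(\pi_0)_{\#}T=(\Haus\llc W_1)\wedge i_T(x,\nu(x))\,\nu(x)^*,$$
and by \eqref{supp_proj} we have $\spt(\pi_0)_{\#}T\subset\overline{R_T}$.

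Next I would apply the Constancy theorem for Lipschitz submanifolds (Theorem~\ref{constancytheoremLipschitz}) on $U=\rd$ to the cycle $(\pi_0)_{\#}T$ and the connected Lipschitz submanifold $M$. Its boundary hypothesis is vacuous, and its support hypothesis holds because, using \eqref{supp_proj} (and (i), which guarantees $M\subset\overline{R_T}$),
$$\spt(\pi_0)_{\#}T\setminus M=\spt(\pi_0)_{\#}T\cap(\overline{R_T}\setminus M),$$
which is closed by (ii). The theorem then produces an integer $r$ and an orienting $(d-1)$-vector field $\xi$ of $M$ with $\spt\big((\pi_0)_{\#}T-r(\Haus\llc M)\wedge\xi\big)\subset\rd\setminus M$; restricting this identity to ${\bf 1}_M$, and using that $(\pi_0)_{\#}T\llc{\bf 1}_M$ and $(\Haus\llc M)\wedge\xi$ are both concentrated on $M$, gives
$$(\pi_0)_{\#}T\llc{\bf 1}_M=r\,(\Haus\llc M)\wedge\xi.$$

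Comparing with the restriction to $M$ of the formula from the first step yields
$$(\Haus\llc(W_1\cap M))\wedge i_T(x,\nu(x))\,\nu(x)^*=r\,(\Haus\llc M)\wedge\xi.$$
Passing to mass measures gives $|i_T(x,\nu(x))|=|r|$ for $\Haus$-a.a.\ $x\in W_1\cap M$, together with $\Haus(M\setminus W_1)=0$ when $r\neq0$ (while $i_T(x,\nu(x))=0$ a.e.\ on $W_1\cap M$ when $r=0$). When $r\neq0$, comparing orientations gives $i_T(x,\nu(x))\nu(x)^*=r\,\xi(x)$ a.e.\ on $W_1\cap M$, hence $i_T(x,\nu(x))=\pm r$; since for $\Haus$-a.a.\ $x$ the rectifiable set $W_T$ is near $(x,\nu(x))$, up to an $\Haus$-null set, a $C^1$-graph over $M$, the map $\nu$ is $\Haus$-a.e.\ locally continuous on $W_1\cap M$, so this sign is locally constant, hence constant on the connected set $M$; thus $i_T(x,\nu(x))$ equals a fixed integer $\Haus$-a.e.\ on $W_1\cap M$. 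Finally $\Haus(W_2)=0$ by assumption and $\Haus(W_3)=0$ always: $\pi_0W_T$ is $(\Haus,d-1)$-rectifiable, so $\Haus$-a.a.\ of its points lie in $R_T$, and over $\Haus$-a.a.\ points of $R_T$ the Legendrian property (Theorem~\ref{T-tan}) forces $\{n\in\sph:(x,n)\in W_T\}$ to have at most two (antipodal) elements, i.e.\ such points lie in $W_1\cup W_2$. Hence $\pi_0^{-1}M\cap W_T$ agrees, up to an $\Haus$-negligible set, with $\{(x,\nu(x)):x\in W_1\cap M\}$, and the asserted constancy of $i_T$ on $\pi_0^{-1}M$ follows.

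The two points I expect to demand the most care are the verification of the support hypothesis of Theorem~\ref{constancytheoremLipschitz} — this is the only genuinely new ingredient over Theorem~\ref{T-LM}, and it is exactly where \eqref{supp_proj} and assumptions (i), (ii) are consumed — and the passage from the current identity $(\pi_0)_{\#}T\llc{\bf 1}_M=r(\Haus\llc M)\wedge\xi$ to pointwise constancy of the integer-valued function $i_T$ on $\pi_0^{-1}M$: this requires matching the canonical orientation $\nu(\cdot)^*$ with the global orientation $\xi$ of the connected $M$ (via the local graph structure of $W_T$) and verifying that the part of $W_T$ lying over $W_2\cup W_3$ is negligible and so does not interfere.
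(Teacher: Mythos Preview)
Your approach is essentially identical to the paper's: verify via \eqref{supp_proj} and assumptions (i), (ii) that $\spt(\pi_0)_{\#}T\setminus M$ is closed, then apply Theorem~\ref{constancytheoremLipschitz}. The paper's proof is literally that one line, leaving entirely implicit the passage (via Proposition~\ref{P-pi0}) from the resulting current identity to the pointwise constancy of $i_T$ that you spell out.
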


\begin{proof}
Assumptions (i) and (ii) together with \eqref{supp_proj} imply that $\spt (\pi_0)_{\#}T\setminus M$ is closed, hence, we may apply Theorem~\ref{constancytheoremLipschitz}.
\end{proof}

\subsection*{Support of a Legendrian cycle}
If $T$ is the normal cycle of the graph of a sufficiently regular locally Lipschitz function (convex, semiconvex, or even delta-convex) then $\spt T$ is contained in the graph of the Clarke normal bundle of the function. This follows from a result on Monge-Amp\`ere functions \cite[Theorem~2.2]{Fu89}. In the sequel, we show an analogous result for a general Legendrian cycle which is full-dimensional in a stronger sense.

We call a Legendrian cycle $T$ {\it strongly full-dimensional} if $\spt T\cap\rho(\spt T)=\emptyset$. Note that, in particular, the normal cycle of a Lipschitz domain must be strongly full-dimensional whenever it exists (cf.\ \cite{RZ03}).

Let $T$ be a strongly full-dimensional compactly supported Legendrian cycle given in the form \eqref{repres}, let
$W_R$ be the closure of the set of all $(x,n)\in W_T$ such that $x\in R_T$ and $n\perp\cT(x)$, and let $C_T$ denote the set of all $(x,n)\in\rd\times\sph$ such that $n$ lies in the spherical convex hull of all points $u\in\sph$ such that $(x,u)\in W_R$.

\begin{proposition}
For any strongly full-dimensional compactly supported Legendrian cycle $T$, 
$$\spt T\subset C_T.$$
\end{proposition}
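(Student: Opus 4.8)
The plan is to argue by contradiction: suppose $(x_0,n_0)\in\spt T\setminus C_T$. Since $C_T$ is defined fiberwise — $n_0$ lies outside the spherical convex hull of $\{u\in\sph:\,(x_0,u)\in W_R\}$ — there exists a closed spherical cap (equivalently, an open halfspace $\{y\in\rd:\, y\cdot v>c\}$ for suitable $v\in\sph$, $c\in(-1,1)$) containing $n_0$ in its interior but disjoint from that fiber of $W_R$. The strategy is to produce a slice of $T$ near $(x_0,n_0)$ that is a nonzero cycle supported in a small set whose $\pi_0$-image avoids $R_T$ entirely, and then derive a contradiction with the structure of $(\pi_0)_\#T$ from Proposition~\ref{P-pi0} together with the description of $\spt(\pi_0)_\#T\subset\overline{R_T}$ in \eqref{supp_proj}, or alternatively with the Constancy theorem. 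The strong full-dimensionality enters to guarantee that the relevant slice does not cancel against a reflected piece.

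**The main steps.** First I would fix the separating halfspace and choose a small ball $B$ around $x_0$ in $\rd$ so that, by the separation property at $x_0$ and a compactness/continuity argument, for all $x\in B$ the fiber $\{u:\,(x,u)\in W_R\}$ still avoids a slightly shrunk version of the cap around $n_0$. Then I would slice $T$ by the map $\pi_0$ (or by a linear functional composed with $\pi_0$, as in the proofs of Theorems~\ref{T-uniq} and \ref{T-uniq_2}) over a set of $x$-values of positive measure near $x_0$, obtaining nonzero cycles $\langle T,\pi_0,x\rangle$ for $x$ in a positive-measure set near $x_0$. Each such slice lives in $\pi_0^{-1}\{x\}\cong\{x\}\times\sph$. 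Next I would argue that on these slices the part near $n_0$ cannot be cancelled: by strong full-dimensionality $\spt T\cap\rho(\spt T)=\emptyset$, so near $(x_0,n_0)$ the slice has a definite orientation and cannot telescope to zero. The crucial point is then that $x_0\notin R_T$: if $x_0$ were in $R_T$, the Legendrian structure (Theorem~\ref{T-tan}, and the remark that for a.e.\ $x\in R_T$ one needs $n\perp\cT(x)$) would force $(x_0,n_0)$ into $W_R$, contradicting that $n_0$ was separated from that fiber. Hence a whole neighborhood worth of $x$-values near $x_0$ lies outside $R_T$ — but then Proposition~\ref{P-pi0} shows $(\pi_0)_\#T$ is carried by $W_1\cup W_2\subset\pi_0(W_R)\subset\overline{R_T}$, so these slices contribute nothing to $(\pi_0)_\#T$, which is consistent only if $T$ itself vanishes near $(x_0,n_0)$ — contradicting $(x_0,n_0)\in\spt T$.

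**Filling the gap between "slices vanish" and "$T$ vanishes".** The subtle point is that a current can have nonzero slices that integrate to something carried elsewhere only if there is cancellation, and the place I must rule that out is exactly where strong full-dimensionality is used. Concretely: by \eqref{supp_proj} and the fact that $B$ can be chosen with $\overline{R_T}\cap(\text{the separated region})$ controlled, the portion of $W_T$ over $B$ that has $n$ near $n_0$ must have $\pi_0$-image in a set disjoint from $R_T$, so $J_{d-1}(\pi_0|W_T)=0$ there by the argument in the proof of Proposition~\ref{P-pi0} (two independent normals at a single base point, or degenerate tangent plane). This forces some principal curvature $\kappa_i=\infty$ there, which in turn — together with the Legendrian tangent structure — pins $n$ to be orthogonal to a nondegenerate subspace, i.e.\ forces membership in $W_R$ after all, the desired contradiction.

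**The main obstacle.** The hardest part will be the passage from "the base point $x_0$ is separated from the $W_R$-fiber" to a robust statement that holds on a positive-measure set of nearby $(x,n)$ with nonzero slice mass — i.e.\ making the separation argument quantitative and stable under the slicing, since $W_R$ is defined as a closure and $C_T$ involves spherical convex hulls that can behave discontinuously in $x$. I would handle this by working with an open cap strictly containing $n_0$ whose closure still misses the (closed) fiber $\{u:(x_0,u)\in W_R\}$, then using upper semicontinuity of $x\mapsto\{u:(x,u)\in W_R\}$ (which follows because $W_R$ is closed and $T$ is compactly supported) to get the same separation for all $x$ in a neighborhood of $x_0$. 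The remaining bookkeeping — choosing the slicing direction transverse to the relevant tangent data, verifying nonvanishing of the slice on a positive-measure set via the mass formula $\bM(T\llc(p\circ\pi_0)^\#\Omega)=\int\bM\langle T,p\circ\pi_0,\cdot\rangle$, and invoking Theorem~\ref{T_Hardt} and the Constancy theorem exactly as in the proof of Theorem~\ref{T-uniq_2} — is then routine.
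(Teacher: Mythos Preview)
Your plan does not reach a contradiction, and the place where you think the difficulty lies (making the halfspace separation stable in $x$) is not where the real work happens.

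First, two concrete errors. The claim ``$x_0\notin R_T$'' is not justified and is in general false: one can perfectly well have $x_0\in R_T$ while $n_0$ lies outside the spherical convex hull of $\{u:(x_0,u)\in W_R\}$; nothing in the Legendrian structure forces a point of $\spt T$ (as opposed to an a.e.\ point of $W_T$) to have $n\perp\cT(x)$. Second, your ``gap--filling'' step is circular: the vanishing of $J_{d-1}(\pi_0|W_T)$ near $(x_0,n_0)$ gives $\kappa_i=\infty$ for some $i$, but that does \emph{not} put $x$ into $R_T$ or $(x,n)$ into $W_R$; it only says the first projection is degenerate there, which is consistent with $(\pi_0)_{\#}T$ being carried by $\overline{R_T}$ and tells you nothing further about $\spt T$. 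In short, knowing where $(\pi_0)_{\#}T$ lives cannot control $\spt T$ at points where the first projection collapses, and those are exactly the points in question.

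The paper's argument is structurally different. One sets $S:=T\llc{\bf 1}_{(C_T)^c}$ and observes that $\partial S$ is supported in $C_T$ (since $T$ is a cycle). Then one runs the slicing machinery of Theorem~\ref{T-uniq_2} verbatim to produce a nonzero indecomposable $(d-1-m)$-current $R$ supported in a sphere $M=\{x\}\times(\sph\cap V(x))$, with $\spt\partial R\subset C_T\cap M$. Now the Constancy theorem (Theorem~\ref{constancytheoremLipschitz}) is applied on the connected open piece $M'\subset M$ complementary to the spherical convex hull of $C_T\cap M$: either $r\neq 0$, in which case $M'\subset\spt R\subset\spt T$, and since $M'$ is the complement of a proper spherically convex set in a sphere it meets $\rho(M')$, contradicting \emph{strong} full-dimensionality; or $r=0$, in which case $\spt R\subset M\setminus M'\subset C_T$, contradicting $R\subset S$. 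You list ``invoke Hardt and Constancy as in Theorem~\ref{T-uniq_2}'' as routine bookkeeping after your separation argument, but in fact this Constancy step on the sphere slice, with boundary confined to $C_T$ and the resulting dichotomy, \emph{is} the proof; the halfspace separation and the analysis of $(\pi_0)_{\#}T$ are not used at all.
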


\begin{proof}
Assume again that for all $(x,n)\in W_T$, the approximate tangent space $\Tan^{d-1}(W_T,(x,n))$ has the structure given in Theorem~\ref{T-tan} and let $g=\pi_0|W_T$. Let $W_k$ denote the set of all $(x,n)\in W_T$ where $\rank Dg(x,n)=k$ (equivalently, $k$ principal curvatures are finite and the remaining $d-1-k$ infinite). Denote
$$T_k:=T\llc {\bf 1}_{W_k},\quad k=0,\ldots, d-1.$$
We shall show by induction that $\spt T_k\subset C_T$ for all $k$. For $k=d-1$, this follows immediately from the definition of $C_T$ since $T_{d-1}$ is supported by $W_R$. Let further $m<d-1$ be given and assume that $\spt T_k\subset C_T$, $k>m$. We shall show that $\spt T_m\subset C_T$ as well.

Consider the Legendrian current $S:=T\llc{\bf 1}_{(C_T)^c}$ (which, of course, need not be a cycle). We shall show that $S=0$. Assume, for the contrary, that $S\neq 0$. We proceed exactly as in the proof of Theorem~\ref{T-uniq_2} and we find a non-zero indecomposable integer-rectifiability $(d-1-m)$-current $R$ supported in $M:=\{x\}\times(\sph\cap V(x))$, where $V(x)$ is a $(d-m)$-dimensional subspace of $\rd$. Of course, $\partial S$ is supported in $C_T$. Choose some orienting $(d-1-m)$-vectorfield $\eta$ of $M$. 
Let $M'$ be the complement (in $M$) of the spherical convex hull of $C_T\cap M$. Applying Theorem~\ref{constancytheoremLipschitz} to $R$, we find that
$$\spt (R-r(\Ha^{d-1-m}\llc M)\wedge\eta)\subset M\setminus M'$$
for some $r\in\R$. If $r\neq 0$ then $M'\subset\spt R\subset\spt T$, but this would contradict the strong full-dimensionality assumption. Thus $r=0$, $\spt R\subset M\setminus M'$, which is contained in the spherical convex hull of $C_T(x)$. By the definition of $C_T$, this proves that $S=0$, a contradiction.
\end{proof}

\end{document}